\documentclass[ amsfonts,reqno]{article}
\usepackage{makecell}
\usepackage{mathrsfs}
\setcellgapes{4pt}

\newcommand{\centered}[1]{\begin{tabular}{l} #1 \end{tabular}}

\newcommand{\bfn}{{\mathbf n}}

\newcommand{\bfN}{{\mathbf N}}
\usepackage{import}
\usepackage{xifthen}
\usepackage{pdfpages}
\usepackage{transparent}

\newcommand{%
    \def\svgwidth{\textwidth}
    \import{./}{.tex_pdf}
}[1]{%
    \def\svgwidth{\textwidth}
    \import{./}{#1.tex_pdf}
}

\usepackage{epstopdf}

\newcommand{\bi}{\mathbf{i}}
\newcommand{\bj}{\mathbf{j}}
\newcommand{\bk}{\mathbf{k}}
\newcommand{\bod}{\mathbf{N}}
\newcommand{\spa}{\mathbf{n}}

\newcommand{\Cs}{\mathscr{C}_{space}}
\newcommand{\Cb}{\mathscr{C}_{body}}

\newcommand{\TT}{T}


\usepackage{mathrsfs}
\usepackage{amsmath} 
\usepackage{amsfonts}
\usepackage{amsthm}
\usepackage{url}
\usepackage{graphicx}		
\usepackage{epsfig,psfrag}  
\usepackage{enumerate}
\newtheorem{lemma}{Lemma}[section]

\newtheorem{theorem}{Theorem}

\theoremstyle{definition}
\newtheorem{definition}[lemma]{Definition}
\newtheorem{rmrk}[lemma]{Remark}

\newcommand{\e}{{\mathbf e}}

\newcommand{\SOt}{\mathrm{SO}_3}

\renewcommand{\v}{\mathbf{v}}

\newcommand{\II}{\mathrm I}


\renewcommand{\>}{\rangle}
\newcommand{\<}{\langle}

\newcommand{\x}{{\mathbf x}}

\newcommand{\tr}{\mbox{tr}}

\renewcommand{\u}{{\bf u}} 
\newcommand{\g}{\mathfrak{g}}

\renewcommand{\a}{{\bf a}}

\renewcommand{\d}{{\bf d}}
\renewcommand{\o}{ \boldsymbol{\omega}}

\newcommand{\st}{\, | \,}

\newcommand{\R}{\mathbb{R}}

\newcommand{\Z}{\mathbb{Z}}

\newcommand{\SL}{\mathrm{SL}}

\newcommand{\SLt}{{\SL_2(\R)}}

\newcommand{\SO}{\mathrm{SO}}

\newcommand{\slt}{\mathfrak{sl}_2(\R)} 
 
\renewcommand{\sl}{\mathfrak{sl}}

\newcommand{\so}{\mathfrak{so}} 
\newcommand{\sot}{{\so_3}}

\newcommand{\Ad}{{\rm Ad}} 
\renewcommand{\mod}{{\rm mod\; }}

\newcommand{\n}{\noindent} 
\newcommand{\bs}{\bigskip}
 
\newcommand{\sn}{\smallskip\n} 

\newcommand{\mn}{\medskip\noindent}
\newcommand{\bn}{\bs\n}



\newcommand{\be}{\begin{equation}}
\newcommand{\ee}{\end{equation}}

\usepackage[all]{xy}

\renewcommand{\d}{\mathrm{d}}

\usepackage{color}

\usepackage{bm}

\newcommand{\ob}{{\boldsymbol{\omega}}}
\newcommand{\Ob}{\boldsymbol{\Omega}}




\title {Hill's equation,  tire tracks and rolling cones}

\author{Gil Bor\footnote{
CIMAT, A.P.~402,
Guanjuato, Gto. 36000,
Mexico;
gil@cimat.mx} 
\and
Mark Levi\footnote{
Department of Mathematics,
Penn State,
University Park, PA 16802, USA;
levi@math.psu.edu}
}
\date{November 12, 2019}
\begin{document}
\maketitle

\begin{abstract} Louis Poinsot has shown in 1854 that the motion of a rigid body, with one of its points fixed,   can be described as the rolling without slipping  of one cone,  the `body cone', along another, the `space cone', with their common vertex at the fixed point. This description has been further refined by the second author in 1996, relating the geodesic curvatures of the spherical curves formed by intersecting the cones with the  unit sphere in Euclidean $\R^3$, thus enabling a reconstruction of  the  motion of the body from knowledge of the space cone  together with the (time dependent) magnitude of the angular velocity vector.  In this article  we show that a   similar  description exists for  a time dependent family of unimodular  $ 2 \times 2 $ matrices in terms of rolling cones  in  3-dimensional Minkowski space $\R^{2,1}$ and  the associated `pseudo spherical' curves,  in either the   hyperbolic plane $H^2$ or its Lorentzian    analog $H^{1,1}$.  In particular, this yields   an apparently new geometric interpretation of 
Schr\"odinger's (or Hill's) equation 
$ \ddot x + q(t) x =0 $  in terms of rolling without slipping of curves in the  hyperbolic plane.
\end{abstract}

\tableofcontents

\section{Introduction} The motion of a rigid body in $\R^3$,  with one of its points fixed,  consists at every moment  of rotation about an instantaneous  axis passing through the fixed point, also called the {\em angular velocity axis}. This is well known and easy to imagine (see for example the book \cite[p.~125]{Ar}). What is perhaps less well known is the following  remarkable  19th century theorem of Louis Poinsot \cite{Po}, describing the motion   in terms of {\em rolling without slipping of one cone along another:} 
\begin{quote}
{\em When a body is continuously moving round one of its points, which is fixed, the locus
of the instantaneous axis in the body is a cone, whose vertex is at the fixed point: the
locus of the instantaneous axis in space is also a cone whose vertex is at the fixed point
[\ldots] the actual motion of the body can be obtained by making the former of these
cones (supposed to be rigidly connected with the body) roll on the latter cone (supposed to
be fixed in space).} 
(Quoted from \cite[p.~2]{Wh}). See Figure \ref{fig:poinsot}. 
\end{quote}

\begin{figure}[h!]
    \def\svgwidth{\textwidth}
    \import{./}{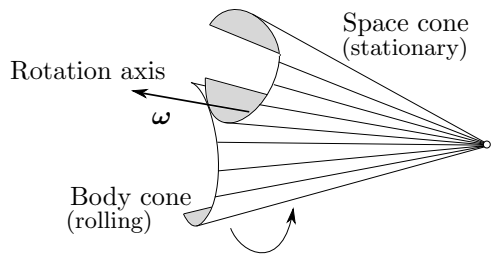}

\caption{\small Poinsot's Theorem: the body cone is rolling without slipping on  the space cone, and is tangent to it  along the instantaneous axis of rotation.}\label{fig:poinsot}
\end{figure}

As the second author has shown \cite{Ma}, this rolling cones   description can be made more precise: if we intersect each of the cones in Poinsot's theorem with a sphere centered at the fixed point we obtain a pair of spherical curves whose geodesic curvatures are related by the magnitude of the angular velocity vector $\ob$, enabling a reconstruction of  the  motion of the body from knowledge of the space cone  together with the (time dependent) magnitude $|\ob|$ (see Theorem \ref{thm:so3} below for the precise statement). 

Poinsot's Theorem can be reformulated  more abstractly as a statement about smooth curves in the orthogonal group $\SO_3$. It is natural to look for an analog  for other groups. In this paper we do that for the M\"obius group $\mathrm{PSL}_2(\R)\simeq \SO_{2,1}$. Poinsot's Theorem and its refinement of \cite{Ma} then  become a statement about the phase flow of the non-autonomous Hamiltonian linear system of ordinary differential equations
\be\label{eq:sys}\dot\x(t)=a(t)\x(t),
\ee
where $\x(t)\in\R^2$ and $a=a(t)\in\slt$, the space of $2\times 2$ traceless matrices. The salient features  of this interpretation are: 

\begin{itemize}
\item Solving equation \eqref{eq:sys} is equivalent to reconstructing a curve on a `pseudo-sphere' in Minkowski's space  $\R^{2,1}$ from its geodesic curvature.
\item  The  phase flow of \eqref{eq:sys}  can be 
visualized as a rigid motion in  $\R^{2,1}$, under which motion one
cone rolls on another without slipping.
\item  The rigid motion, and thus the solutions to  equation\eqref{eq:sys}, is completely determined by two cones, the `body cone' and the `space cone',  lying in    $\R^{2,1}$ and
 given explicitly in terms of  $a(t)$.

\item Unless $a(t)$  is a commuting family of matrices, the system  \eqref{eq:sys} cannot be solved explicitly by the na\"ive formula $\x(t)=\exp\left(\int_0^t a(\tau)\d \tau\right)\x(0)$ (unlike in the scalar version of this equation). Nevertheless,  the rolling cones interpretation  allows for a correction of this   formula in terms of parallel transport along curves in the pseudo-sphere in $\R^{2,1}.$ Interestingly, the cumulative angle of rotation appears in the solution despite the fact that the $a(t)$ do not commute. 
\end{itemize}

\mn  {\bf Plan  of the paper}.  In the next section, Section \ref{sec:back}, we describe in more detail Poinsot's Theorem and its refinement due to \cite{Ma}, see Theorem \ref{thm:so3}. 
In Section \ref{sec:sl2}  we formulate our main result, Theorem \ref{thm:sl2}, generalizing Theorem \ref{thm:so3} to rigid motions in Minkowski's space, thus giving a novel `rolling cones' interpretation  to the phase flow of system 
\eqref{eq:sys}. Section \ref{sec:proofs} contains a proof  of both Theorem \ref{thm:so3} and \ref{thm:sl2} in a unified group theoretic language, so as to make the generalization  from   $\SOt$ to $\SLt$  straightforward, see Theorem \ref{thm:both}. 
In the last two sections,  we  illustrate  our main result  via two examples of equation \eqref{eq:sys}: periodically perturbed harmonic oscillator (Mathieu's equation) and  the 2D bicycling equation.

\section{Background}\label{sec:back}

Consider the motion of a rigid body in Euclidean $\R^3$,  with one of its points fixed at the origin. 
If we follow any of the points of the body, initially at $\x(0)$, then its position   $\x(t)\in\R^3$  at time $t$ satisfies 
$$
\dot\x(t)=\ob(t)\times\x(t),
$$
 where  $\ob(t)\in \R^3$  is the associated {\em angular velocity vector} -- a vector aligned with the axis of rotation, whose  length $|\ob(t)|$ is the angular velocity of the body  about the axis of rotation and whose   direction is given  by the `right hand rule'. 
 
 Denote by $a_\ob :\R^3\to\R^3$
the map $\x\mapsto \ob\times \x$; then  the  last equation can be rewritten as  the  non-autonomous linear system
\be\label{eq:rigbod}
\dot\x(t)=a(t) \x(t),\qquad \mbox{where }\x(t)\in\R^3, \ a(t)=a_{\ob(t)}\in\sot, 
\ee 
and where $\sot$ denotes the space of $3\times 3$ antisymmetric real matrices. An  equation equivalent to   \eqref{eq:rigbod}  is the  equation for its fundamental solution matrix $g(t)\in\SO_3$
(the group of $3\times 3$ orthogonal matrices with determinant 1), satisfying 
\be
\dot g(t)=a(t)g(t), \ g(0)=\II,  
\qquad  \mbox{ where } \ g(t)\in\SO_3,\ a(t)=a_{\ob(t)}\in \sot,
\label{eq:fund}
\ee
and   $\II$ denotes  the identity $3\times 3$ matrix. The relation between the solutions of equations \eqref{eq:rigbod} and \eqref{eq:fund}  is $\x(t)=g(t)\x(0).$

Figure \ref{fig:conesintro} illustrates the above mentioned Poinsot theorem and the geometrical solution of equation \eqref{eq:fund}. 
In the figure, $\Cs$ denotes the locus of   rotation axes of the body, the `space cone'  (the cone, with vertex at the origin, generated by the space curve $\ob(t)$). Viewed from a body-fixed frame, the  rotation axes form another cone, the `body cone'  $\Cb$,  rigidly attached to the body, with vertex at the origin as well. Then,  as the body moves according to equation \eqref{eq:fund}, the cone $\Cb$ (rigidly affixed to the body) rolls without slipping along $\Cs$: 
at each moment, $\Cb$  is {\em tangent} to $\Cs$ along the instantaneous  axis  of rotation, which is (momentarily) at rest.

As shown in \cite{Ma}, this rolling cones   description can be made more precise, as follows. For a given non-vanishing `space angular velocity' curve $\ob(t)$ and a solution $g(t)$ to equation \eqref{eq:fund}, let $\Ob(t)=g(t)^{-1}\ob(t)$ be the `body angular velocity' curve, and    $\spa(t):= \ob(t)/|\ob(t)|, \bod(t):= \Ob(t)/|\Ob(t)|$ the (parametrized)  intersections of $\Cs, \Cb$ (respectively) with the unit sphere $S^2\subset \R^3$. 

\begin{theorem}[\cite{Ma}]\label{thm:so3}
\begin{enumerate}[(1)]

\item $g(t)$ rolls $\bod$ without slipping along $\spa$; that is: $g(t)\bod(t)=\spa(t)$, $g(t)\dot\bod(t)=\dot\spa(t)$, for all $t$. See Figure \ref{fig:conesintro}.

\item For non vanishing $\dot \spa$, the (spherical) geodesic curvatures $K,k$ of $\bod, \spa$ (respectively) are related by
\be\label{eq:geod1}
K=k-{|\ob|\over |\dot \spa|}.
\ee

\item Let $R[\Phi(t)]$ be  the rotation about  $\ob(0)$  by the angle $\Phi(t)=\int_0^t |\ob(\tau)|\d\tau$.  Then 
\be\label{eq:decomp}
g(t)=P_\spa(t)\circ R[\Phi(t)]\circ P_\bod(t)^{-1},
\ee
where $P_\bod(t)$ is (spherical) parallel transport along $\bod$ from $\bod(0)$ to $\bod(t)$,
 extended to $\R^3$ by $\bod(0)\mapsto \bod(t)$ and  similarly for $P_\spa(t).$
  
\end{enumerate}
\end{theorem}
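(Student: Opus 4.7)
I would organize the proof around the observation that, once the Darboux frames of $\bod$ and $\spa$ are assembled into $\SO_3$-valued curves, $g(t)$ intertwines them; all three claims then follow by angular-velocity bookkeeping.

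For part~(1) I would start from $\Ob=g^{-1}\ob$ and $g\in\SO_3$ to get $|\Ob|=|\ob|$, hence $g\bod=\ob/|\ob|=\spa$; differentiating this with $\dot g=a(t)g$ and using $a(t)\spa=\ob\times\spa=0$ (since $\ob\parallel\spa$) yields $g\dot\bod=\dot\spa$, so the two curves share the common speed $v:=|\dot\bod|=|\dot\spa|$. For part~(2) I would introduce the Darboux frames $F_s:=(\spa\,|\,\tb_s\,|\,\boldsymbol{\nu}_s)\in\SO_3$ and $F_b:=(\bod\,|\,\tb_b\,|\,\boldsymbol{\nu}_b)\in\SO_3$ with $\tb_{s,b}=\dot{(\cdot)}/v$ and $\boldsymbol{\nu}_{s,b}=(\cdot)\times\tb_{s,b}$; because $g\in\SO_3$ preserves cross products, part~(1) promotes to the matrix identity $gF_b=F_s$. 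Differentiating the middle column $\tb_s=g\tb_b$ and pairing with $\boldsymbol{\nu}_s$ would give
\[
vk=\dot\tb_s\cdot\boldsymbol{\nu}_s=(\ob\times\tb_s)\cdot\boldsymbol{\nu}_s+(g\dot\tb_b)\cdot\boldsymbol{\nu}_s=|\ob|+vK,
\]
using $\ob\times\tb_s=|\ob|\boldsymbol{\nu}_s$ and orthogonality of $g$; rearranging produces $K=k-|\ob|/v$.

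For part~(3) I would start from $g=F_sF_b^{-1}$ and factor each Darboux frame canonically as
\[
F_s(t)=P_\spa(t)\,F_s(0)\,R(\psi_s(t)),\qquad F_b(t)=P_\bod(t)\,F_b(0)\,R(\psi_b(t)),
\]
where $R(\psi)$ is rotation of $\R^3$ about the first basis vector $\e_1$; such a factorization exists uniquely because $P_\spa(t)^{-1}F_s(t)$ has first column $\spa(0)$, and the angles $\psi_{s,b}$ then record the in-plane discrepancy between parallel transport and the moving Darboux frame. Since $g(0)=\II$ forces $F_s(0)=F_b(0)=:F_0$, substitution would give
\[
g(t)=P_\spa(t)\,F_0R(\psi_s-\psi_b)F_0^{-1}\,P_\bod(t)^{-1}=P_\spa(t)\,R[\psi_s-\psi_b]\,P_\bod(t)^{-1},
\]
because $F_0R(\psi)F_0^{-1}$ is rotation about $F_0\e_1=\ob(0)/|\ob(0)|$. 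To finish, I would compute the angular velocity of $F_s$ in two ways: directly from the Darboux equations it equals $kv\spa+v\boldsymbol{\nu}_s$, while from the factorization it equals $\omega_{P_\spa}+\dot\psi_s\spa(t)=v\boldsymbol{\nu}_s+\dot\psi_s\spa(t)$, forcing $\dot\psi_s=kv$; likewise $\dot\psi_b=Kv$. Part~(2) then gives $\dot\psi_s-\dot\psi_b=(k-K)v=|\ob|=\dot\Phi$, which integrates from $\psi_{s,b}(0)=0$ to $\psi_s-\psi_b=\Phi$, as required.

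The hard part will be the bookkeeping of part~(3): setting up the canonical factorization $F_s=P_\spa F_s(0)R(\psi_s)$ cleanly, identifying $\dot\psi_s=kv$ as the angular rate of the in-plane rotation that reconciles parallel transport with the moving Darboux frame, and recognizing that conjugation by $F_0$ converts rotation about $\e_1$ into the rotation $R[\Phi]$ about $\ob(0)$ that appears in the statement. Parts~(1) and~(2) enter as the precise inputs that make $\dot\psi_s-\dot\psi_b$ collapse to $|\ob|$.
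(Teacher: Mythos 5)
Your proof is correct, and while it rests on the same conceptual pillar as the paper's (geodesic curvature measures the angular deficit between the moving tangent and parallel transport), it packages the argument differently. The paper works entirely in the Lie-theoretic picture: part (1) is one line using $[a,\spa]=0$; part (2) is a congruence computation $\ddot\spa\equiv(|a|+K|\dot\spa|)[\spa,\dot\spa]\ (\mathrm{mod}\ \spa,\dot\spa)$; and part (3) is reduced to checking the identity on the single vector $\bod'(t)$, which is then handled by citing Lemma~4.8 ($\gamma'(t)=P_\gamma(t)R[\theta(t)]\gamma'(0)$ with $\theta'=k$) and the curvature relation from (2). Your route instead promotes everything to a matrix identity between Darboux frames: from $g\bod=\spa$ and orthogonality you get $gF_b=F_s$, hence $g=F_sF_b^{-1}$; the canonical factorization $F_\bullet(t)=P_\bullet(t)F_0R(\psi_\bullet(t))$ then yields the decomposition formula at once, with the angle identified as $\psi_s-\psi_b$, and the angular-velocity bookkeeping $\dot\psi_\bullet=k_\bullet v$ (which is really a re-proof of the paper's Lemma~4.8) closes the loop via part~(2). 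Your derivation of (2) via $\dot\tb_s\cdot\nu_s$ is likewise a frame-language rendering of the paper's congruence computation.

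What your approach buys: the factorization $g=F_sF_b^{-1}=P_\spa F_0R(\psi_s-\psi_b)F_0^{-1}P_\bod^{-1}$ makes the decomposition formula transparent rather than something to be verified after the fact, and the identification $\dot\psi=kv$ is a pleasant structural fact in its own right. What the paper's abstraction buys: by working with $\g$ and $[\ ,\ ]$ rather than $\R^3$ and $\times$, the identical proof covers $\SLt$ and the Minkowski geometry of Theorem~3.1 without modification, whereas your Darboux-frame calculation would need to be rerun with the Lorentzian rotations $R(\theta)=\cosh\theta\,\cdot+\sinh\theta\,[\gamma,\cdot]$ in the $H^{1,1}$ case and with some care about signs (e.g.\ $[\gamma,\gamma']$ being $\pm\nu$). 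No gaps in what you wrote for the $\SO_3$ case.
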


Statement (1) is just a reformulation of Poinsot Theorem. Statement  (2), taken together with statement (1),  can be thought of as a geometrical/mechanical  `recipe' for
 solving equation \eqref{eq:fund}: given a `space angular velocity curve' $\ob(t)$, one uses  equation \eqref{eq:geod1} to construct $\bod(t)$ from its geodesic
  curvature and  the initial conditions $\bod(0)=\spa(0)$, $\dot\bod(0)=\dot\spa(0)$. Then $g(t)\in \SO_3$ is the (unique)
  rigid motion mapping   $\bod(t)\mapsto \spa(t),$ $\dot\bod(t)\mapsto  \dot\spa(t)$. 
  
  Statement (3) of Theorem \ref{thm:so3} is a curious fact 
regarding    `composition of a non-commuting  family of matrices'. Namely, the difficulty of solving   \eqref{eq:fund} explicitly 
   lies in the fact that,  in general,  the matrices $ a(t) $ do   not commute for different values of $t$. If, on the other hand,  the  axis of rotation is fixed, i.e., $\ob(t)=\omega(t)\e$ for some fixed  unit vector $\e$ and a scalar function $\omega(t)$, so that the $a(t)$ commute,
     then  $g(t)$ is the rotation about $\e$  by the cumulative angle 
     $\int_{0}^{t}\omega( \tau ) \d\tau ,$ i.e.,  
$g(t)=\exp\left(\int_{0}^{t}a( \tau ) \d\tau \right)$ 
is the solution to equation \eqref{eq:fund}, just as in the scalar version of equation \eqref{eq:fund}. In spite of the  lack of commutativity in general, the cumulative angle still appears in the decomposition formula \eqref{eq:decomp}, with an appropriate  correction by  parallel translations.

\begin{figure}
    \def\svgwidth{\textwidth}
    \import{./}{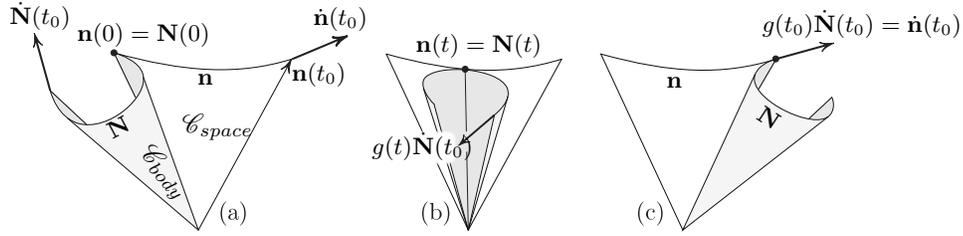}

	\caption{\small A  view of the cone $\Cb$  rolling along the   cone $\Cs$ without slipping under the rigid motion  $g(t)$. The 	curves ${\bf N}$, ${\bf n}$  are the intersections of these cones with the unit sphere.}
	\label{fig:conesintro}
\end{figure}

Here is a  heuristic explanation for the decomposition formula  \eqref{eq:decomp}.     
As the body curve $\bod$ rolls along $\spa$  in some time range $0\leq t\leq t_0$, the vector $ \dot {\bf N} (t_0) $ in 
Figure~\ref{fig:conesintro}  swings over and coincides with $ \dot {\bf n} (t_0) $ at $ t=t_0 $. The first key idea is that  {\it  this hard-to-describe motion can  be decomposed into two simpler ones}, as shown in Figure~\ref{fig:decomp}:   tangent transport $\TT^{-1} _{\bf N}$ of  $ \dot {\bf N} (t_0) $   along $  {\bf N} $ backwards to   
$ {\bf N} (0)= {\bf n} (0) $, followed by  tangent transport $\TT_ {\bf n}$ forward along $ {\bf n} $   to $ {\bf n} (t_0) $: 
\begin{equation} 
	\dot {\bf n} (t_0)= (\TT_ {\bf n}\circ \TT^{-1} _{\bf N} )\, \dot  {\bf N} (t_0).
	\label{eq:tt}
\end{equation} 
But 
\[
	\TT_ {\bf n} = P_ {\bf n} \circ R(\theta _ {\bf n}), \  \  \TT_ {\bf N} = P_ {\bf N} \circ R(\theta _ {\bf N}), 
\]  
where $P_ {\bf n} $ denotes   parallel transport along ${\bf n}$, $\theta _ {\bf n}$ is the integral of the geodesic curvature of ${\bf n}$ and $R(\theta) $ is the rotation around $ {\bf n}(0) = {\bf N} (0)$ through the angle $\theta$; thus (\ref{eq:tt}) becomes   
\begin{equation} 
		\dot {\bf n} (t_0) = g(t_0)\,\dot  {\bf N} (t_0)= (P_ {\bf n} \circ R(\theta _{\bf n}-\theta _{\bf N})\circ P^{-1}_ {\bf N})\, \dot  {\bf N} (t_0). 
		\label{eq:compintro}
\end{equation}  

The second key idea is the observation that {\it the  angle $ \theta _{\bf n}-\theta _{\bf N} $ turns out  to be the  time integral of the angular velocity $|\ob(t)|$ of the rigid motion  $g(t)$} 
 -- this is made precise by  equation  \eqref{eq:geod1}, relating the geodesic curvatures of ${\bf N}$ 
and of ${\bf n}$.    
\begin{figure}
    \def\svgwidth{\textwidth}
    \import{./}{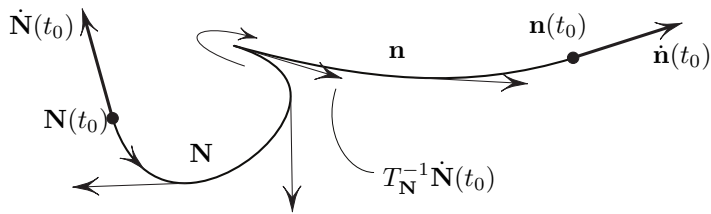}

\caption{\small The map  $ \dot {\bf N} (t_0)\mapsto\dot {\bf n} (t_0) $ is a composition of tangent transport backwards along ${\bf N}$ and 	forward along ${\bf n}$. This composition can be accomplished instead by parallel transport backwards along ${\bf N}$, followed by a 
	rotation around the cusp point, followed by parallel transport forward along ${\bf n}$. The angle of the rotation around the cusp turns out to be the 
	integral of the angular velocity of the rigid motions $ g(t)\in \SO_3 $.  } 
	\label{fig:decomp}
\end{figure}

\section{The main result }\label{sec:sl2} We apply the above ideas to gain geometrical insight into the linear system of ordinary differential equations
\be \label{eq:symp}
 \dot\x(t)=a(t)\x(t),\qquad \mbox{ where }\x(t)\in\R^2, \ a(t)\in\slt,
\ee
and  where $\slt$ denotes the set of traceless $2\times 2$ matrices.  
 This  system  includes,  among numerous  applications in mathematics,  physics and engineering, the  1-dimensional Schr\"odinger's, or Hill's, equation 
\be \label{eq:hill}
\ddot x  + q (t)x = 0,
\ee 
where  $x=x(t)$  and $q(t)$ are real  functions. The last equation is  
obtained as a special case of \eqref{eq:symp} by setting 
$$\x(t)={x(t)\choose \dot x(t)}, \quad a(t)=\left(\begin{matrix}0& 1\\-q(t)&0\end{matrix}\right).
$$
Another special case of \eqref{eq:symp}  is the `planar bicycle equation' (see Section \ref{sect:be} below). 

The fundamental solution matrix $g$ of  \eqref{eq:symp}, defined (as before) by
\be\label{eq:lie1}
\dot g(t)=a(t)g(t), \quad  g(0)=\II, 
\ee
lies in  $\SLt$,  the group of $2\times 2$ matrices with determinant 1. As before,  the relation between the solutions of equations \eqref{eq:symp} and \eqref{eq:lie1}  is $\x(t)=g(t)\x(0).$

The starting point of our approach is the observation that the    linear area--preserving flow in $\R^2$  of  equation \eqref{eq:symp} 
{\it can  equivalently be viewed as a rigid motion in the Lie algebra } 
$\slt$.  More precisely, instead of considering the motion of points in $\R^2 $ under $g\in\SLt$, we consider  the motion of points in $ \slt$,  the $3$--dimensional Lie algebra of $\SLt$, given by conjugation with $g$:    
$$
	\Ad_g:  \sl_2( \R)\rightarrow \sl_2( \R)  ,\quad a\mapsto gag^{-1}, \quad a\in\slt, \quad g\in \SLt.
$$
	 Now $ \Ad_g $, being a conjugation,  preserves the spectrum of each $a\in\slt$, and in particular, 
$\det(a)$. Since $\tr(a) = 0 $, $\det(a)$   turns out to be an indefinite quadratic form, which  makes $\slt$  a Minkowski space (we provide the details later in Section \ref{sec:prelim}). Thus, {\em $ \Ad_g$ is an   orthogonal   transformation  of the Minkowski space $\slt\simeq\R^{2,1}$}, a `rigid motion'. 
The map $ g\mapsto \Ad_g $ is $ 2 $ to $1$, so up to a minor ambiguity, all  properties of $g$ can be recovered from those of 
$ \Ad_g $. For instance, $g$ is elliptic, i.e., conjugate to a  rotation of $ \R^2 $  through an angle $\theta$, if and only if  $ \Ad_g $
 is a rigid rotation in $\slt$ (in the Minkowski metric)  around a timelike axis, rotating the  orthogonal (spacelike) plane  through the angle $ 2 \theta $; 
 similar statements hold for  parabolic  and hyperbolic elements in $\SLt$. 

One  advantage of looking at $ \Ad_g $ acting on $ \sl_2( \R)$ (versus $ g $ acting on $ \R^2 $) is that a  geometry (hidden heretofore in $\R^2$) is revealed;   the already mentioned orthogonality of $ \Ad_g $ is one example. Furthermore,  orthogonal transformations of  Minkowski's space, just like Euclidean ones,   have axes of rotation:  lightlike  for the  elliptic rotations  and spacelike  for the hyperbolic ones; in $\R^2$, none of this is visible. 

 By carrying through  this analogy  between Euclidean and Minkowski rigid motions, we then obtain,  with some minor modifications due to sign and nullity  details,   the following    almost-verbatim Minkowski version of  Theorem 
  \ref{thm:so3}.

\begin{theorem}\label{thm:sl2}
Let $a(t)\in\slt$  be a given non-vanishing `space angular velocity' curve  with non vanishing $|a|:=2\sqrt{|\det(a)|} $ and   let $g(t)\in\SLt$ be   the 
 solution to  
$\dot g=ag,$ $g(0)=\II$. Let  
 $A=g^{-1}a g$  be 
  the associated `body angular velocity' curve and    $\spa:= a/|a|, \bod:= A/|A|$ be the projections of $a,A$ (respectively) on the unit `pseudo-sphere'  $\Sigma\subset \slt$ (either the hyperbolic plane $H^2$ or its Lorentzian analog $H^{1,1}$, depending on the  sign of $\det(a)$; see Section \ref{sec:prelim} below for  details). Then 
\begin{enumerate}[(1)]

\item $g(t)$ rolls $\bod$ without slipping along $\spa$, i.e.,  $\Ad_{g(t)}\bod(t)=\spa(t)$, $\Ad_{g(t)}\dot\bod(t)=\dot\spa(t)$, for all $t$.

\item For non vanishing $|\dot \spa|$, the (pseudo-spherical) geodesic curvatures $K,k$ of $\bod, \spa$ (respectively) are related by
$$K=k-{|\ob|\over |\dot \spa|}.
$$

\item Let $R[\Phi(t)]$ be  the (pseudo) rotation about  $a(0)$  by the angle $\Phi(t)=\int_0^t |\ob(\tau)|\d\tau$.  Then 
$$\Ad_{g(t)}=P_\spa(t)\circ R[\Phi(t)]\circ P_\bod(t)^{-1},$$
where $P_\bod(t)$ is parallel transport along $\bod$ from $\bod(0)$ to $\bod(t)$,
 extended to $\slt$ by $\bod(0)\mapsto \bod(t)$ and  similarly for $P_\spa(t).$
  
\end{enumerate}

\begin{rmrk}In the above theorem, the assumption that $|a(t)|$ is non-vanishing, i.e., the space angular velocity is nowhere null, is essential. For the special
case of Hill's equation \eqref{eq:hill}, this amounts to assuming that the potential $q(t)$ does not vanish for all $t$.
Studying this case of $a$ crossing the null cone remains an interesting question which we do not
address in this paper.
\end{rmrk}

\end{theorem}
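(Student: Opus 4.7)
The plan is to mirror the proof of Theorem~\ref{thm:so3} verbatim, working entirely with the Killing form on $\slt$ and the adjoint representation, so that the only thing changing from the $\SOt$ case is the signature of the invariant form. Throughout, I identify $\slt$ with $\R^{2,1}$ via $\det$, so that $\Ad_g$ is an element of $\SO_{2,1}$, and I work on the level surface $\Sigma = \{X : \det(X) = \pm 1/4\}$, which is $H^2$ or $H^{1,1}$ depending on the sign of $\det(a)$.

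For part~(1), I would start from the definition $A = g^{-1} a g$, i.e., $a = \Ad_g A$, which after dividing by $|A| = |a|$ (since $\Ad_g$ preserves $\det$) gives $\Ad_g \bod = \spa$. Differentiating $\Ad_g X$ along $t$ produces
$$\frac{d}{dt}\Ad_g X = [\dot g g^{-1}, \Ad_g X] + \Ad_g \dot X = [a, \Ad_g X] + \Ad_g \dot X.$$
Applied to $X = \bod$ this yields $\dot\spa = [a, \spa] + \Ad_g \dot\bod$, and since $\spa$ is a scalar multiple of $a$ the bracket $[a,\spa]$ vanishes. So $\Ad_g \dot\bod = \dot\spa$, which is the no-slipping condition. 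Note also that this forces $|\dot\bod| = |\dot\spa|$, since $\Ad_g$ is a pseudo-orthogonal isometry.

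For part~(2), I would differentiate $\dot\spa = \Ad_g \dot\bod$ one more time to get
$$\ddot\spa = [a,\dot\spa] + \Ad_g \ddot\bod.$$
The term $\Ad_g \ddot\bod$, projected onto $T_\spa\Sigma$, is $\Ad_g$ applied to the projection of $\ddot\bod$ onto $T_\bod\Sigma$, which by definition is $K |\dot\bod|^2$ times a unit normal to $\dot\bod$ in $T_\bod\Sigma$; the image under the isometry $\Ad_g$ is $K|\dot\spa|^2$ times a unit normal to $\dot\spa$ in $T_\spa\Sigma$. The extra term $[a,\dot\spa] = |a|[\spa,\dot\spa]$ is, up to a universal constant, the Minkowski cross product $\spa \star \dot\spa$, which is orthogonal to both factors, hence lies in $T_\spa\Sigma$ and is normal to $\dot\spa$, with magnitude $|a||\dot\spa|$. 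Projecting $\ddot\spa$ onto the normal of $\dot\spa$ inside $T_\spa\Sigma$, using the definition $k|\dot\spa|^2 = \langle \ddot\spa, \text{normal}\rangle$, and using $|\dot\bod| = |\dot\spa|$, gives the asserted relation $K = k - |\ob|/|\dot\spa|$ after dividing by $|\dot\spa|^2$.

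For part~(3), I follow the heuristic in Figure~\ref{fig:decomp} literally. Tangent (frame) transport $T_\gamma(t)$ along any curve $\gamma$ on $\Sigma$ factors as $P_\gamma(t) \circ R[\theta_\gamma(t)]$, where $R[\theta_\gamma(t)]$ is the ambient rotation about the radial axis at $\gamma(0)$ (a rotation, boost, or null rotation depending on the causal type of $\gamma(0)$) through the accumulated angle $\theta_\gamma(t) = \int_0^t k_\gamma |\dot\gamma|\,d\tau$. The rolling condition from part~(1) identifies $\Ad_{g(t)}$ on the two-jet of $\bod(0)$ with $T_\spa(t) \circ T_\bod(t)^{-1}$, and $\Ad_{g(t)}$ is already determined by its action on that two-jet (since $\Ad_{g(t)}$ is an isometry fixing the one-parameter subgroup of rotations about $a(t)$). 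Combining the factorizations and using that $R[\theta_\spa(t)]$ and $R[\theta_\bod(t)]$ are rotations about the same axis $\spa(0)=\bod(0)$, hence commute, the middle factors merge into $R[\theta_\spa(t)-\theta_\bod(t)]$. By part~(2) this exponent equals
$$\int_0^t (k-K)|\dot\spa|\,d\tau = \int_0^t |\ob(\tau)|\,d\tau = \Phi(t),$$
which yields the decomposition formula.

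The main obstacle is the Lorentzian bookkeeping in parts~(2) and~(3): on $H^{1,1}$ the induced metric is indefinite, so the curve $\spa$ can be spacelike or timelike, the ``unit normal in $T_\spa\Sigma$'' comes with a sign $\varepsilon = \pm 1$, and the rotation $R[\Phi]$ in part~(3) is either a Euclidean rotation (if $a(0)$ is timelike), a hyperbolic boost (if $a(0)$ is spacelike), or would degenerate to a parabolic null rotation at points where $a(t)$ is null, which is exactly the case excluded by hypothesis. The cleanest way to handle all of this uniformly is to phrase every inner product, norm, and normal direction in terms of the Killing form $\langle X,Y\rangle = \tfrac12\tr(XY)$ on $\slt$, rather than in coordinates; then the same Lie-algebraic identities that drive the $\SOt$ proof also drive the $\SLt$ proof, with the causal type entering only through an overall sign $\varepsilon$ which cancels in the final formulas.
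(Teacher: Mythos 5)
Your proposal follows essentially the same route the paper takes in its unified proof (Theorem~\ref{thm:both}): differentiate $g\cdot X$ to get $[a,g\cdot X]+g\cdot\dot X$, kill the bracket term using $[a,\spa]=0$ for part~(1), differentiate once more and read off the normal component modulo $\spa,\dot\spa$ for part~(2), and for part~(3) use $|\dot\bod|=|\dot\spa|$ together with the factorization of tangent transport as parallel transport composed with rotation by $\int k\,\d s$ (the paper's Lemma~\ref{lemma:para}), merging the two rotations about the common axis $a(0)$. One small caution: you propose to use $\<X,Y\>=\tfrac12\tr(XY)$, but the paper deliberately takes $\<a,b\>=2\tr(ab)$ on $\slt$ precisely so that the bracket of an orthonormal pair is again a unit vector (Lemma~\ref{lemma:bracket}); with your normalization $[\gamma,\gamma']$ fails to be a unit normal, which would introduce an extraneous constant into the curvature relation of part~(2) and into the $R[\theta]$ formulas used in part~(3), so you would need to carry that factor carefully or switch to the paper's normalization.
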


\section{Notation and  setup} \label{sec:proofs}
 We start with a review of some notation and terminology, mostly standard.  

\subsection{Geometry and algebra of $\SOt$ and $\SLt$}\label{sec:prelim}

Denote in the following by $G$ either $\SO_3$ or $\SLt$ and by $\g$ its Lie algebra,  either $\so_3$  or $\slt$, respectively. 
The conjugation action of $G$ on $\g$, $\Ad:G\to \mathrm{GL}(\g),$  is denoted by 
\be
	\Ad_g(a)=g\cdot a:=gag^{-1}, \quad g\in G,\ a\in\g.
	\label{eq:cong}
\ee 

Define an Ad-invariant inner product   on $\g$ by   
\be\label{eq:inner}
\<a,b\>:=\lambda\,\tr(ab),
\mbox{ where $\lambda=-{1\over 2}$ for $\g=\so_3$  and $\lambda=2$ for $\g=\slt.$}
\ee 
Our choice of the normalization factor for each $\g$ will be explained in a moment. In either case, we set $$|a|:=\sqrt{|\<a,a\>|}.$$ 
The $\Ad$-invariance of $\<\ ,\ \>$  implies that $b\mapsto [a,b]=ab-ba$ is an anti symmetric operator on $\g$ with respect to $\<\ ,\ \>$, i.e., $\<[a,b],c\>=-\<b, [a,c]\>$ for all $a,b,c\in\g$,  hence 
\be\label{eq:as}
\<[a,b],a \>=0,\quad \forall a,b\in\g.\ee
 
\mn 

 Let us examine the resulting geometry of $\g$ in each of the two cases. 
 
 \bn{\bf Case 1:} $\g=\so_3$.
With the choice $\lambda=-{1\over 2}$ in \eqref{eq:inner}, $\<a,b\>:=-\tr(ab)/2$ is a  positive  definite inner product on $\so_3$, the image of the standard inner product on $\R^3$ under the isomorphism  $\R^3\to \sot$, $\o\mapsto a_{\ob} \in  \so_3$, where $a_{\ob}\x:=\ob\times\x.$ Explicitly, 
\be\label{eq:aw}
\ob=\left( \begin{array}{c}\omega _1 \\ \omega _2  \\  \omega _3\end{array} \right)
\mapsto 
a_{\ob}=	\left( \begin{array}{ccc} 0 & -\omega _3 & \omega _2\\ \omega _3 &  0 & -\omega _1  \\  -\omega _2 &  \omega _1 & 0 
	\end{array} \right). 	
\ee
Furthermore, under this isomorphism, the cross  product $\u\times\v$ corresponds to the Lie bracket $[a,b]=ab-ba$ and  the standard action of $\SO_3$ on $\R^3$ corresponds to the conjugation action   (\ref{eq:cong}); that is, 
$$\<a_\u, a_\v\>=\u\cdot\v,\ [a_\u, a_\v]=a_{\u\times \v},  \ g\cdot a_\u=a_{g\u}, \quad \mbox{for  }   g\in\SO_3,\ \u,\v\in\R^3.$$

 \bn{\bf Case 2:} $\g=\slt$. The Lie algebra $\slt$ consists of  traceless $2\times 2$ real matrices, which we choose to write in the form 
$$
	a={1\over 2}\left(\begin{matrix} a_1& a_2+a_3\\a_2- a_3&-a_1\end{matrix}\right), 
$$
so that $ \<a,b\>:=2\tr(ab)=a_1b_1+a_2b_2-a_3b_3.$
Thus  the inner product  is indefinite,  of signature $++-$ (the `spacelike sign convention'). A simpler formula for the associated quadratic form   is
$$\<a,a\>=(a_1)^2+(a_2)^2-(a_3)^2=-4\det(a), \quad a\in\slt.$$

An element $a\in\slt$ is called {\em timelike} if $\<a,a\><0,$ {\em lightlike} (or {\em null}) if $\<a,a\>=0$ and {\em spacelike} if  $\<a,a\>>0.$ These are the three {\em causal types} of elements in  $\slt$,  also referred to as {\em elliptic, parabolic} and {\em hyperbolic}, respectively.

\sn 

The reason  for our  choice   $\lambda=2$ in  formula \eqref{eq:inner} for $\g=\slt$  is the following analog of a familiar property of the vector product in $\R^3$.

\begin{lemma}\label{lemma:bracket} If $a,b\in\slt$ is an orthonormal pair, i.e., $|a|=|b|=1$ and $\<a,b\>=0$, then $(a,b,[a,b])$ is an orthonormal frame in $\slt$, positively oriented with respect to the standard volume form $a_1\wedge a_2\wedge a_3$ if $a,b$ are spacelike, and negatively oriented if one of them is timelike. 
\end{lemma}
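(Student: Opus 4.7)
The plan is to check, in three steps, (a) orthogonality of $[a,b]$ with each of $a$ and $b$, (b) that $|[a,b]|=1$, and (c) the sign of the orientation. Step (a) is already in hand: equation (\ref{eq:as}) gives $\langle [a,b],a\rangle = 0$, and by swapping the roles of $a,b$ and using skew-symmetry of the bracket, also $\langle [a,b],b\rangle = 0$.

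For step (b) the key tool is the $\slt$-analog of the classical identity $|\u\times\v|^{2} = |\u|^{2}|\v|^{2} - (\u\cdot\v)^{2}$, namely
\[
\langle [a,b],[a,b]\rangle \;=\; \langle a,b\rangle^{2} \;-\; \langle a,a\rangle\langle b,b\rangle.
\]
To derive it I will use that any traceless $2\times 2$ matrix $a$ satisfies $a^{2} = -\det(a)\,\II = \tfrac{1}{4}\langle a,a\rangle\,\II$ by Cayley--Hamilton, and therefore, by polarization, $ab+ba = \tfrac{1}{2}\langle a,b\rangle\,\II$. Splitting $ab$ into its symmetric and anti-symmetric parts, squaring and taking traces yields the identity after a short bookkeeping. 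For an orthonormal pair this reduces to $\langle [a,b],[a,b]\rangle = -\langle a,a\rangle\langle b,b\rangle = \pm 1$, so $|[a,b]|=1$; moreover $[a,b]$ is timelike precisely when both of $a,b$ are spacelike, and spacelike when exactly one of them is timelike (two orthogonal unit timelike vectors being ruled out by the $(++-)$ signature). Mutual orthogonality together with the non-nullity of $a,b,[a,b]$ forces linear independence, since $c=\alpha a+\beta b$ with $c\perp a,\; c\perp b$ and $\langle a,a\rangle,\langle b,b\rangle\neq 0$ immediately gives $\alpha=\beta=0$; hence $(a,b,[a,b])$ is a genuine orthonormal frame.

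For step (c) I will observe that the form $T(a,b,c):=\langle [a,b],c\rangle$ is totally alternating trilinear on $\slt$: anti-symmetric in $a,b$ by skew-symmetry of the bracket, and anti-symmetric in $b,c$ by Ad-invariance (the same manipulation as in \eqref{eq:as}). On the $3$-dimensional space $\slt$ this forces $T = \kappa\,(da_1\wedge da_2\wedge da_3)$ for some constant $\kappa$. Evaluating on the basis $E_1,E_2,E_3$ dual to the coordinates $a_1,a_2,a_3$ — explicitly $E_1 = \tfrac{1}{2}\bigl(\begin{smallmatrix}1&0\\0&-1\end{smallmatrix}\bigr)$, etc. — a direct multiplication gives $[E_1,E_2]=E_3$, hence $T(E_1,E_2,E_3)=\langle E_3,E_3\rangle = -1$, whereas the standard volume evaluates to $+1$; therefore $\kappa = -1$ and $T = -\mathrm{vol}$. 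Substituting $c=[a,b]$:
\[
\mathrm{vol}(a,b,[a,b]) \;=\; -\langle [a,b],[a,b]\rangle \;=\; \langle a,a\rangle\langle b,b\rangle,
\]
which equals $+1$ when both $a,b$ are spacelike and $-1$ when exactly one of them is timelike, which is the orientation assertion.

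The only mild obstacle is the algebraic identity $\langle [a,b],[a,b]\rangle = \langle a,b\rangle^{2}-\langle a,a\rangle\langle b,b\rangle$; it is a short trace computation made clean by Cayley--Hamilton. Everything else is dimension counting, Ad-invariance, and a one-basis check.
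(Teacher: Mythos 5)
Your proof is correct, and it takes a genuinely different route from the paper's. The paper argues by reduction to normal form: it exhibits the orthonormal basis $(\bi,\bj,\bk)$ with its commutation relations, observes that conjugation by $\SLt$, permuting $a$ and $b$, and negating $a$ all preserve the orientation of the triple $(a,b,[a,b])$, and then uses transitivity of the adjoint action to place $(a,b)$ at $(\bi,\bj)$ or $(\bk,\bi)$ and read off the answer. Your argument is instead fully invariant: you derive the Lorentzian Lagrange identity $\langle[a,b],[a,b]\rangle=\langle a,b\rangle^{2}-\langle a,a\rangle\langle b,b\rangle$ from Cayley--Hamilton (the slick $a^{2}=\tfrac14\langle a,a\rangle\,\II$ and its polarization), which simultaneously gives $|[a,b]|=1$ and the causal type of $[a,b]$, and you identify $T(a,b,c)=\langle[a,b],c\rangle$ as a constant multiple of the volume form by its total antisymmetry, pinning down the constant by one basis evaluation. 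What your route buys: the orientation sign falls out as $\mathrm{vol}(a,b,[a,b])=\langle a,a\rangle\langle b,b\rangle$, a clean closed formula, and you avoid the unstated transitivity facts about the $\Ad$-action on orthonormal pairs (e.g.\ that $\SO_{2,1}^{0}$ acts transitively on time-oriented spacelike--timelike orthonormal pairs) that the paper's normal-form reduction implicitly relies on. What it costs: a short trace computation for the Lagrange identity, which you outline but do not fully write out. One small point worth making explicit if you flesh this out: the identity $\mathrm{vol}(a,b,[a,b])=\langle a,a\rangle\langle b,b\rangle$ for orthonormal $a,b$ already shows $[a,b]\neq 0$ and that $(a,b,[a,b])$ is a basis (nonzero volume), so your separate linear-independence paragraph is redundant.
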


\begin{proof}

It is easy to check that   
\be\label{eq:basis}
{\bf i} :=   \frac{1}{2} \left( \begin{array}{cc} 1 &0 \\ 0 & -1 \end{array} \right), \quad
{\bf j} :=   \frac{1}{2} \left( \begin{array}{cc} 0 &1 \\ 1 & 0 \end{array} \right),
\quad
	{\bf k} :=   \frac{1}{2} \left( \begin{array}{cc} 0 &1 \\ -1 & 0 \end{array} \right)
\ee
is an  orthonormal basis of $\slt$,   dual to $a_1, a_2,a_3 $, hence it  is positively oriented with respect to $a_1\wedge a_2\wedge a_3.$ Furthermore,  $\bi, \bj$  are spacelike and $\bk$ is timelike,  satisfying 
\be\label{eq:comm}
	[ {\bf i}, {\bf j} ] = {\bf k}, \ \  [{\bf j}, {\bf k} ] = -{\bf i} , \ \ [{\bf k} , {\bf i} ] = -{\bf j}. 
\ee

Now let  $a,b\in\slt$ be an orthonormal pair. Since $a,b$ are  not null and  orthogonal, both are spacelike or  one is timelike and the other spacelike.
In the first case, where $a,b$ are spacelike orthogonal unit vectors, by  conjugating by an appropriate element of $\SLt$ and (possibly) permuting them (neither operation  changes the orientation of $(a,b,[a,b])$), we can assume that $a=\bi$, $b=\bj$, thus $[a,b]=\bk$, hence $(a,b,[a,b])$ is a positively oriented orthonormal frame.

In the second case, where one of $a,b$ is timelike and the other spacelike, 
 by (possibly) permuting $a$ and $b$ and changing $a$ to  $-a$ (these operations do not affect the orientation of $(a,b,[a,b])$), we can assume that $a$ is timelike future pointing ($a_3>0$) and $b$ is  spacelike. Next,  by  conjugating by an appropriate element of $\SLt$,  we can assume  that $a=\bk$ and $b=\bi$, so that  $[a,b]=-\bj$, and hence  
$(a,b,[a,b])$ is a negatively oriented  orthonormal frame, as claimed.
 \end{proof}

\begin{rmrk}
The commutation relations \eqref{eq:comm}
differ from the analogous relations for the cross product in $ \R^3 $ by the ``$-$" sign when the timelike 
vector ${\bf k}$ occurs  in the commutator. Putting it differently, when taking the cross product in the Minkowski space $\slt$, one uses the `right-hand rule' to  determine the direction of the cross product of two spacelike vectors,  and  the `left-hand 
rule' whenever a timelike vector participates in the cross product. 
\end{rmrk}

\subsection{Rolling without slipping}

Denote by $\Sigma\subset \g$ the unit (pseudo) sphere, i.e., the  set of elements $a\in \g$ with $\<a,a\>=\pm 1.$ Thus, for $\g=\so_3$, $\Sigma$ is the standard 2-sphere $S^2=\{a\in\so_3\st \<a,a\>=1\}$, while for $\g=\slt$, $\Sigma$ is either $H^{2}:=\{a\in\slt\st \<a,a\>=-1\}$ (hyperboloid of two sheets), or $H^{1,1}:=\{a\in\slt\st \<a,a\>=1\}$ (hyperboloid of one sheet), see Figure \ref{fig:mink}.

\begin{figure}
    \def\svgwidth{\textwidth}
    \import{./}{fig4.tex_pdf}

\caption{Level sets of  $\<a , a \>=(a_1)^2+(a_2)^2-(a_3)^2$   in $\slt$.}\label{fig:mink}
\end{figure}

\mn 
 
Now let $g(t)$ be a smoothly parametrized curve in  $G$ with $g(0)=\II$ (the identity element in $G$). Define  
\be\label{eq:ang}A(t):=g^{-1}(t)\,\dot g(t), \quad  a(t):=\dot g(t)\,g^{-1}(t)\in\g,
\ee  the {\em  body} and {\em space angular velocities}, respectively, and 
\be\label{eq:angn}\bod(t):=A(t)/|A(t)|, \quad \spa(t) :=a(t)/|a(t)|,\ee 
the radial projections of $A(t),a(t)$ (respectively) onto $\Sigma\subset\g$. Note that in order to define the (pseudo) spherical curves $\bod(t), \spa(t),$ we need to assume that $|a(t)|\neq 0$ for  all $t$, which we assume henceforth. For $G=\SO_3$ this amounts to $a(t)\neq 0$;  for $G=\SLt$ it means that $a(t)$ is non null for all $t$, i.e., it is either spacelike or timelike.

 From equations  \eqref{eq:cong} and \eqref{eq:ang}, we  have
\be\label{eq:one}
\dot g=ag=gA, \ a=g\cdot A, \ \spa=g\cdot\bod ,\   g(0)=\II.
\ee

\begin{rmrk}[About notation] Sometimes,  as in \eqref{eq:one}, we  suppress the explicit dependence on $t$,  i.e.,  $g=g(t) ,$  $a=a(t) $, etc. 
\end{rmrk}

\begin{definition}[Rolling without slipping]\label{def:roll}
 Let $\Gamma(t),\gamma(t)$ be two parametrized    curves in  $\g$. A {\em rolling  without slipping of $\Gamma$ along $\gamma$} is a parametrized curve  $g(t)$ in $G$,  satisfying  for all $t$ the {\em contact} and {\em no slip} conditions: 
\begin{align}\label{eq:roll1}
g(t)\cdot \Gamma(t)&=\gamma(t), \\
 g(t)\cdot \dot \Gamma(t)&=\dot \gamma(t).\label{eq:roll2}
\end{align}
See Figure \ref{fig:rolling}. 
\end{definition}

\begin{figure}
    \def\svgwidth{\textwidth}
    \import{./}{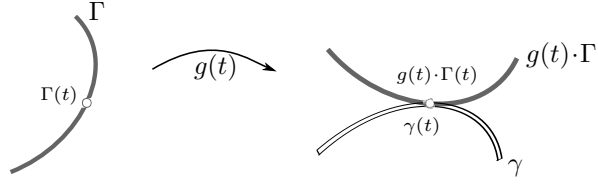}

	\caption{Rolling of a   curve $\Gamma$  along $\gamma$ via  a family of isometries $\Ad_{g(t)}.$} 
	\label{fig:rolling}
\end{figure}

\begin{lemma}
The no-slip condition \eqref{eq:roll2} is equivalent to 
\be\label{eq:roll3}
[a,\gamma]=0,
\ee
where $a=\dot g g^{-1}.$ This expresses the vanishing of the velocity of the `material point' of the moving curve at the contact point $g(t)\cdot \Gamma(t)$ between the two curves. 
\end{lemma}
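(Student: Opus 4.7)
The plan is to differentiate the contact condition \eqref{eq:roll1} in time and compare with \eqref{eq:roll2}; the no-slip condition \eqref{eq:roll3} will then fall out as the residual term, and the ``material point'' interpretation will come from recomputing the same Lie-bracket expression as a genuine velocity in space.

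First I would rewrite the contact condition as $\gamma(t) = g(t)\Gamma(t)g(t)^{-1}$ and differentiate, using $\dot g = ag$, $\dot{g^{-1}} = -g^{-1}a$. Collecting terms, the cross terms in which $\dot g$ (resp.\ $\dot{g^{-1}}$) appears should combine into $a\gamma - \gamma a = [a,\gamma]$, while the $g\dot\Gamma g^{-1}$ term is by definition $g\cdot \dot\Gamma$. This gives the identity
\begin{equation*}
\dot\gamma(t) \;=\; [a(t),\gamma(t)] \;+\; g(t)\cdot\dot\Gamma(t),
\end{equation*}
from which the equivalence of \eqref{eq:roll2} and \eqref{eq:roll3} is immediate: $g\cdot\dot\Gamma = \dot\gamma$ iff $[a,\gamma]=0$.

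For the concluding sentence of the lemma I would justify that $[a,\gamma]$ really is the velocity of the instantaneous material point of the moving curve located at the contact point. The material point in question is the one whose body-frame label is $\Gamma(t_0)$, i.e., it traces out in space the path $s\mapsto g(s)\cdot\Gamma(t_0)$ (with $\Gamma(t_0)$ frozen). Differentiating this path at $s=t_0$ gives $\dot g\,\Gamma g^{-1} + g\Gamma\,\dot{g^{-1}} = a\gamma - \gamma a = [a,\gamma]$ at $t_0$, so vanishing of $[a,\gamma]$ says exactly that this material point has zero spatial velocity at the moment it is the contact point.

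There is no real obstacle here: the only care needed is to keep track of the two distinct meanings of ``velocity'' that are being compared — on one side $g\cdot\dot\Gamma$, which is the image under $\Ad_{g(t)}$ of the parameter-derivative of the body curve, and on the other side the instantaneous spatial velocity of the frozen material point — and to see that the identity above cleanly separates these two contributions. Once the Leibniz computation is written out, both conclusions of the lemma follow simultaneously from the single formula $\dot\gamma = [a,\gamma] + g\cdot\dot\Gamma$.
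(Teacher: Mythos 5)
Your proof is correct and takes exactly the same route as the paper: differentiate the contact condition $\gamma = g\Gamma g^{-1}$ using $\dot g = ag$ to obtain $\dot\gamma = [a,\gamma] + g\cdot\dot\Gamma$, from which the equivalence is immediate. Your additional computation justifying the ``material point'' interpretation (differentiating $s\mapsto g(s)\cdot\Gamma(t_0)$ with $\Gamma(t_0)$ frozen) is a welcome elaboration of a remark the paper makes but does not spell out.
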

\begin{proof}Taking the derivative with respect to $t$ of  equation~\eqref{eq:roll1} and using equations \eqref{eq:one}, 
$$[a,\gamma]+g\cdot \dot\Gamma=\dot \gamma.$$
Thus $g\cdot \dot \Gamma=\dot \gamma$ (equation \eqref{eq:roll2}) is equivalent to $[a,\gamma]=0$. 
\end{proof}

\subsection{Geodesic curvature}

 Let $\gamma(t)$ be a smoothly parametrized (pseudo) spherical curve in $ \Sigma\subset\g$ with  nowhere null tangent, i.e., $|\dot\gamma|$ does not vanish, and let $\gamma':=\dot\gamma/|\dot\gamma|$ be the unit tangent along $\gamma$. Then $(\gamma, \gamma', [\gamma,\gamma'])$ is a  `moving'  orthonormal frame along $\gamma$.  

\mn {\bf Notation.} We denote henceforth by dot  derivative along a curve $\gamma$ with respect to an arbitrary parameter $t$, $\dot\gamma:=\d\gamma/\d t, $  and by prime  derivative with respect to arc length parameter $s$, $\gamma':=\d \gamma/\d s=\dot\gamma/|\dot\gamma| $  (provided $|\dot\gamma|$ does not vanish).

\begin{definition}\label{def:geod}The {\em geodesic curvature} of an oriented  (pseudo) spherical curve $\gamma$ in $\Sigma \subset \g$ with nowhere null  tangent   is  its {\em normal acceleration}, i.e., the coefficient of $[\gamma, \gamma']$  in the decomposition 
 of $\gamma''$ as a linear combination of $\gamma,\gamma', [\gamma, \gamma']$. 
 
\end{definition} 

This definition can be also expressed conveniently as  
\be \label{eq:geod}
\gamma''\equiv k[\gamma, \gamma']\  \mod \gamma,\gamma'.
\ee
For an arbitrary  parametrization $\gamma(t)$,  $\gamma'' \equiv \ddot \gamma/|\dot\gamma|^2\equiv k[\gamma, \gamma'] \ \mod \gamma,\gamma'$, from which  follows 
$$\ddot \gamma
\equiv  k |\dot\gamma|[\gamma,\dot\gamma]\ \  \mod \gamma, \dot\gamma.$$

\begin{rmrk}[About the sign of the geodesic curvature] Our Definition \ref{def:geod} of  geodesic curvature  may differ in sign from other common definitions in the literature, since this sign depends on the choice of a unit normal to the curve. Our choice of unit normal  $[\gamma,\gamma']$ is mostly for simplicity in subsequent formulas. At any rate, all applications of this definition  in this article are invariant under   sign change of $k$. For example, equation \eqref{eq:curv} below. 
\end{rmrk}

\subsection{Parallel transport} A vector field $v(t)$ tangent to $\Sigma$ along $\gamma(t)$ is {\em parallel} if $\dot v(t)\perp T_{\gamma(t)}\Sigma$ for all $t$. That is,
$$\dot v\equiv 0 \quad  \mod \gamma.$$
Any initial vector $v(0)\in  T_{\gamma(0)}\Sigma$ can be extended uniquely  to  parallel vector field $v(t)$ along $\gamma$, by solving the last displayed equation (a linear system of ODEs). The resulting map $P_\gamma(t): T_{\gamma(0)}\Sigma\to T_{\gamma(t)}\Sigma$, $v(0)\mapsto v(t)$, is an isometry (with respect to  the restriction of $ \<\ ,\ \>$ to  $\Sigma$),  called {\em parallel transport} along $\gamma$. 

\sn 

 The two notions, geodesic curvature and parallel transport, are related as follows. Let $\gamma(t)$ be a (pseudo) spherical curve with non vanishing $|\dot\gamma|$  and  $v(t)$   the parallel transport of $\gamma'(0)$ along $\gamma$ (or any parallel vector field along $\gamma$ with  the same causal type  as $\gamma'$). At each point $\gamma(t)$ along the curve, $\gamma'$ is related to $v$  by a unique  orientation preserving  isometry  $R(\theta)$ of  $T_{\gamma(t)}\Sigma$,    with `rotation angle' $\theta$. That is, in the Riemannian case, 
\be\label{eq:rot1}
\gamma' = R(\theta)v=(\cos\theta)v+(\sin \theta)[\gamma,v], 
\quad \Sigma=S^2 \mbox{ or } H^2,
\ee
  and in the Lorentzian case 
\be\label{eq:rot2}
\gamma' = R(\theta)v=(\cosh\theta)v+(\sinh \theta)[\gamma,v], \quad \Sigma=H^{1,1}.
\ee  
   
   \begin{lemma}\label{lemma:para} For any oriented  curve $\gamma$ in $\Sigma$ with non-null tangent, its geodesic curvature  is the rate of change, with respect to arc length,  of the `rotation angle'  of the unit tangent $\gamma'$, relative to  a parallel unit vector of  the same causal type as $\gamma'$, as defined in equations \eqref{eq:rot1}-\eqref{eq:rot2}; that is, 
   $$k=\theta'.$$ 
It follows that 
$$\gamma'(t)=R\left[\theta(t)\right]P_\gamma(t)\gamma'(0)=P_\gamma(t)R\left[\theta(t)\right]\gamma'(0),$$
 where $$\theta(t)=\int_0^{L_t}k\,\d s=\int_0^tk|\dot\gamma|\d \tau,$$ 
and where $s$ is an arc length parameter along $\gamma$, $L_t$ is the length of $\gamma$ between $\gamma(0)$ and $\gamma(t)$ and $\tau$ is the same parameter as $t$. 
\end{lemma}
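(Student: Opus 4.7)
The plan is to differentiate the defining relation between $\gamma'$ and the parallel vector $v$ and match the resulting expression with the definition of geodesic curvature in \eqref{eq:geod}. Concretely, write $\gamma' = R(\theta)v$ using \eqref{eq:rot1} or \eqref{eq:rot2} as appropriate, treat $\theta = \theta(s)$ as an unknown function of arc length, and apply $d/ds$ to both sides. The contributions split into three groups: derivatives of $\cos\theta,\sin\theta$ (or $\cosh\theta,\sinh\theta$) producing a $\theta'$ factor, derivatives of $v$, and derivatives of the bracket $[\gamma,v]$ which generate the terms $[\gamma',v]$ and $[\gamma,v']$.

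Next I would use the parallelism of $v$, which by definition means $v' \equiv 0 \mod \gamma$, so that $v'$ contributes nothing modulo $\gamma$ (and $[\gamma,v']=0$ since $[\gamma,\gamma]=0$). The remaining nontrivial bracket is $[\gamma',v]$, which equals a scalar multiple of $[[\gamma,v],v]$ after substituting $\gamma' = R(\theta)v$. The key algebraic fact is the ``BAC--CAB''-style identity $[[\gamma,v],v] = \langle v,v\rangle\gamma - \langle \gamma,v\rangle v$ (up to an overall sign that depends on whether we are in $\so_3$ or $\slt$), which follows from the same computation used in Lemma~\ref{lemma:bracket}. Since $\langle\gamma,v\rangle=0$, this shows $[[\gamma,v],v] \equiv 0 \mod \gamma$, so $[\gamma',v]$ drops out modulo $\gamma$ as well. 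What survives is $\gamma'' \equiv \theta'\bigl(\mp\sin\theta\, v + \cos\theta\,[\gamma,v]\bigr) \mod \gamma$ in the Riemannian case, and the analogous $\cosh/\sinh$ expression in the Lorentzian case.

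Using the same triple-bracket identity, I would expand $[\gamma,\gamma'] = \cos\theta[\gamma,v] + \sin\theta[\gamma,[\gamma,v]]$ and observe that $[\gamma,[\gamma,v]]$ is proportional to $v$ with precisely the sign needed so that $[\gamma,\gamma']$ equals the parenthesized expression above divided by $\theta'$. Comparing coefficients against the defining relation $\gamma'' \equiv k[\gamma,\gamma'] \mod \gamma$ yields $k=\theta'$. The sign bookkeeping between the elliptic case ($\Sigma=S^2$ or $H^2$) and the hyperbolic case ($\Sigma=H^{1,1}$) is the one step that needs genuine care, since the relative sign between $[[\gamma,v],v]$ and $\gamma$ flips with the causal type of $v$; this is the main technical obstacle, but it matches the $\cos/\cosh$ vs. $\sin/\sinh$ flip in \eqref{eq:rot1}--\eqref{eq:rot2}, so the conclusion $k=\theta'$ comes out uniformly.

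Finally, the two displayed formulas follow immediately. Integrating $\theta'=k$ in arc length gives $\theta(t) = \int_0^{L_t} k\,ds$, and the change of parameter $ds = |\dot\gamma|\,d\tau$ converts this to $\int_0^t k|\dot\gamma|\,d\tau$. For the identity $\gamma'(t) = R[\theta(t)]\,P_\gamma(t)\,\gamma'(0)$, by construction $v(t) = P_\gamma(t)\gamma'(0)$, so $\gamma'(t) = R[\theta(t)]v(t)$ is exactly the claim. The commutation $R[\theta]\circ P_\gamma = P_\gamma\circ R[\theta]$ is then automatic from the fact that $P_\gamma(t)$ is an orientation-preserving linear isometry $T_{\gamma(0)}\Sigma \to T_{\gamma(t)}\Sigma$ and $R[\theta]$ is intrinsically defined on each tangent plane by the same angle.
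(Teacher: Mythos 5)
Your proof is correct and takes essentially the same route as the paper's: differentiate the defining relation $\gamma' = R(\theta)v$, use parallelism of $v$ to kill the $v'$ and $[\gamma,v']$ terms modulo $\gamma$, and match the surviving coefficient against $\gamma''\equiv k[\gamma,\gamma']$. The paper compresses all of this into ``by a simple calculation,'' whereas you unpack the two algebraic steps it glosses over --- the extra term $\sin\theta\,[\gamma',v]$ (which the paper's chain-rule expression $\theta'(\partial_\theta R)v + R(\theta)v'$ silently omits because it vanishes $\bmod\ \gamma$) and the BAC--CAB-style triple-bracket identity needed to see it drop out and to identify $[\gamma,[\gamma,v]]$ with $\pm v$. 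Your sign bookkeeping between the $\cos/\sin$ and $\cosh/\sinh$ cases is exactly the point one must watch, and you resolve it correctly; the final integration and the commutation $R[\theta]\circ P_\gamma = P_\gamma\circ R[\theta]$ are handled as in the paper.
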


\begin{proof}From $\gamma'=R(\theta)v$  follows, by a simple calculation,  $\gamma''=\theta'(\partial_\theta R(\theta))v+R(\theta)v'\equiv\theta'[\gamma,R(\theta)v]= \theta'[\gamma, \gamma']\  \mod \gamma$, implying $k=\theta'$. 
\end{proof}

\begin{rmrk} In case $\Sigma=H^{1,1}$,  $|\dot\gamma(t_0)|$ may vanish even if  $\dot\gamma(t_0)\neq 0.$  Then one cannot reparametrize $\gamma$ by arc length and $k$ becomes infinite at $t=t_0$. It would be interesting to understand the significance of this phenomena for a linear system  $\dot g=ag$. 
\end{rmrk}

\section{The combined  theorem and its proof}

 With the above background we now state and prove the following  result, which  combines Theorems \ref{thm:so3} and \ref{thm:sl2}. 
\begin{theorem}\label{thm:both} Let $G$ be either $\SLt$ or $\SOt$, $\g$ its Lie algebra, $a(t)$ a smoothly parametrized curve in $\g$  with non-vanishing $|\dot a|$, and $g(t)\in G$ the solution to  $ \dot g = a g $, $ g(0)=\II$.  Set $A(t)=g^{-1}(t)a(t),$ and $\bod(t), \spa(t)$   the  corresponding normalized (pseudo) spherical curves in $\Sigma\subset\g$, as defined in equations~\eqref{eq:ang}--\eqref{eq:angn}.
Then  

\begin{enumerate}[(1)]
\item  (Poinsot Theorem) $g(t)$ rolls    without slipping the curve $A(t)$ along $a(t)$ and $\bod(t)$ along $\spa(t)$. 

\item (The reconstruction formula)  If $|\dot \spa|$ is non-vanishing then the geodesic curvatures $K, k$ of the (pseudo) spherical curves $\bod, \spa$  (respectively) are related  by 
\be\label{eq:curv}
K=k-{|a|\over  |\dot\spa|}.\ee

\item (The decomposition formula)
$$\Ad_{g(t)}=\tilde P_\spa(t) \circ R\left[\Phi(t)\right]\circ (\tilde P_\bod(t))^{-1},$$
where $\tilde P_\spa(t)$ is  parallel transport $T_{\spa(0)}\Sigma\to T_{\spa(t)}\Sigma$ along $\spa$, extended to $\g$ by $\spa(0)\mapsto \spa(t)$,  similarly for $\tilde P_\bod(t)$, and  $R\left[\Phi(t)\right]$
is the (pseudo) rotation around the axis $a(0)$ by the angle $\Phi(t)=\int_0^t|a(\tau)|\d \tau.$
\end{enumerate}
\end{theorem}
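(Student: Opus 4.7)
My plan is to prove the three parts in order, using the governing ODE $\dot g=ag$, the $\Ad$-invariance of the inner product (so that $\Ad_g$ acts as a linear isometry of $\g$ preserving the Lie bracket), and Lemma~\ref{lemma:para}.

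\textbf{Part (1).} From $A=g^{-1}\dot g=g^{-1}ag$, equivalently $a=\Ad_g(A)$, one obtains the contact relation $\Ad_g\bod=\spa$ after dividing by the common length $|A|=|a|$. The no-slip Lemma of Section~\ref{sec:proofs} reduces the no-slip condition to $[a,\spa]=0$ (and to $[a,a]=0$ for rolling $A$ on $a$), both of which are immediate because $\spa$ is a scalar multiple of $a$.

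\textbf{Part (2).} Differentiating $\spa=g\bod g^{-1}$ once using $\dot g=ag$ yields
$\dot\spa=[a,\spa]+\Ad_g\dot\bod=\Ad_g\dot\bod$ (the commutator vanishes), so $|\dot\spa|=|\dot\bod|$. Differentiating again and using $a=|a|\spa$,
$$
\ddot\spa=[a,\dot\spa]+\Ad_g\ddot\bod=|a|[\spa,\dot\spa]+\Ad_g\ddot\bod.
$$
The defining congruence $\ddot\bod\equiv K|\dot\bod|[\bod,\dot\bod]\ \mod \bod,\dot\bod$ transports under $\Ad_g$ (which sends the pair $\bod,\dot\bod$ to $\spa,\dot\spa$ and preserves the bracket) to $\Ad_g\ddot\bod\equiv K|\dot\bod|[\spa,\dot\spa]\ \mod\spa,\dot\spa$. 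Comparing with $\ddot\spa\equiv k|\dot\spa|[\spa,\dot\spa]\ \mod\spa,\dot\spa$ and using $|\dot\bod|=|\dot\spa|$ delivers the reconstruction formula.

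\textbf{Part (3).} Set $F(t):=\tilde P_\spa(t)\circ R[\Phi(t)]\circ\tilde P_\bod(t)^{-1}$. Both $F(t)$ and $\Ad_{g(t)}$ are orientation-preserving linear isometries of $\g$, continuously joined to the identity at $t=0$, so it suffices to show they agree on the non-null pair $\bod(t),\dot\bod(t)$ (whose span is a non-degenerate 2-plane; orientation then determines the action on its orthogonal complement up to the sign already fixed). On $\bod(t)$: $\tilde P_\bod(t)^{-1}$ sends $\bod(t)\mapsto\bod(0)=\spa(0)$, which is the axis of $R[\Phi(t)]$ and is therefore fixed; then $\tilde P_\spa(t)$ sends $\spa(0)\mapsto\spa(t)=\Ad_{g(t)}\bod(t)$. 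On $\dot\bod(t)$: Lemma~\ref{lemma:para} gives $\bod'(t)=P_\bod(t)R[\theta_\bod(t)]\bod'(0)$ and $\spa'(t)=P_\spa(t)R[\theta_\spa(t)]\spa'(0)$, with $\theta_\bod=\int_0^tK|\dot\bod|\,d\tau$ and $\theta_\spa=\int_0^tk|\dot\spa|\,d\tau$, and at $t=0$ one has $\bod'(0)=\spa'(0)$. Threading $\dot\bod(t)=|\dot\bod(t)|\bod'(t)$ through the three factors of $F(t)$ yields $|\dot\bod(t)|P_\spa(t)R[\theta_\bod(t)+\Phi(t)]\spa'(0)$; part~(2) gives $\theta_\bod+\Phi=\theta_\spa$, so by Lemma~\ref{lemma:para} applied to $\spa$ this equals $|\dot\bod(t)|\spa'(t)=\dot\spa(t)=\Ad_{g(t)}\dot\bod(t)$.

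The main obstacle is part~(3), where one must carefully track (i) that parallel transport and rotation combine as in Lemma~\ref{lemma:para}, (ii) that the axis $a(0)$ of $R[\Phi(t)]$ coincides with the common initial point $\bod(0)=\spa(0)$ so that $R[\Phi(t)]$ fixes it, and (iii) that the extensions $\tilde P_\bod,\tilde P_\spa$ are genuinely orientation-preserving isometries of $\g$. Once these are in place, the angle identity $\theta_\spa-\theta_\bod=\Phi$ from part~(2) closes the argument. In the Lorentzian case $\Sigma=H^{1,1}$ one must also track causal types and use the hyperbolic form \eqref{eq:rot2} of $R$, but the structure of the proof is unchanged.
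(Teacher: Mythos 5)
Your proposal is correct and follows essentially the same route as the paper's proof: part (1) by reducing no-slip to $[a,\spa]=0$ via the lemma (the paper does the equivalent computation directly), part (2) by differentiating $\dot\spa=\Ad_g\dot\bod$ and comparing normal components modulo $\spa,\dot\spa$, and part (3) by checking agreement on $\bod(t),\dot\bod(t)$ via Lemma~\ref{lemma:para} and the angle identity $\theta_\bod+\Phi=\theta_\spa$ coming from part (2). The only cosmetic difference is that you explicitly invoke the connectedness-to-identity in part (3), whereas the paper relies just on orientation preservation, which already suffices in dimension three.
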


\begin{proof}  (1)  If  $\gamma=g\cdot \Gamma$ then $\dot \gamma=\dot g\cdot \Gamma+g\cdot \dot\Gamma=[a,\gamma]+g\cdot \dot \Gamma.$ For $\gamma=a, \Gamma=A$, since $a=g\cdot A$ and $[a,a]=0$, we get  $\dot a=g\cdot\dot A.$
Next, $\spa=g\cdot \bod$ implies  $\dot \spa=[a,\spa]+g\cdot\dot \bod=|a|^{-1}[a,a]+g\cdot\dot \bod=g\cdot\dot \bod$.

\mn (2)  Applying $g$ to  $\ddot \bod\equiv  K|\dot\bod|  [\bod, \dot\bod]\  (\mod \bod, \dot\bod)$, we obtain $g\cdot \ddot \bod\equiv  K|\dot\spa|  [\spa, \dot\spa]\  (\mod \spa, \dot\spa).$ Taking derivative of  $\dot \spa=g\cdot\dot \bod$, we get 
$\ddot \spa=[a,\dot\spa]+g\cdot\ddot \bod=|a|[\spa,\dot\spa]+g\cdot\ddot \bod\equiv (|a|+ K|\dot\spa| ) [\spa, \dot\spa]\  (\mod \spa, \dot\spa).$ On the other hand,  $\ddot \spa\equiv  k|\dot\spa|  [\spa, \dot\spa]\  (\mod \spa, \dot\spa),$ hence $|a|+ K|\dot\spa|= k|\dot\spa|,$ which gives formula \eqref{eq:curv}.
 
\mn (3) Both sides of the equation are  orientation preserving isometries of $\g$, mapping $\bod(t)\mapsto\spa(t)$, hence it is enough to show that they coincide on $\bod'(t)$. By Lemma 
\ref{lemma:para} and equation \eqref{eq:curv},  
$$
\spa'(t)=P_\spa(t)R\left[\theta(t)\right]\spa'(0), \qquad 
\bod'(t)=P_\bod(t)R\left[\Theta(t)\right]\bod'(0),$$
 where $\theta(t)=\int_0^{L_t} k\,\d s$ and 
 $$
 \Theta(t)=     \int_0^{L_t}K\, \d s=\int_0^{L_t}\left(k -{|a|\over |\dot\spa|}\right)\d s
 =\int_0^{L_t} k\, \d s-\int_0^t|a| \d \tau=\theta(t)-\Phi(t).$$  
 It follows that 
 \begin{align*}
P_\spa(t)  R\left[\Phi(t)\right] (  P_\bod(t))^{-1}\bod'(t)&=
  P_\spa(t)  R\left[\Phi(t)\right] R\left[\theta(t)\right]\bod'(0)=\\
&= P_\spa(t)  R\left[\Phi(t)+\Theta(t)\right]\bod'(0)=\\
&= 
 P_\spa(t)  R\left[\theta(t)\right]\spa'(0)=\spa'(t),
 \end{align*} as claimed. 
\end{proof}

\section{Example: the Mathieu equation (timelike angular velocity)}\label{sec:examples}
In this section we illustrate Theorem \ref{thm:both} for $G=\SLt$ with a well-known example. The {\em Mathieu equation}
\be\label{eq:harm}
\ddot x+\omega^2(1+\epsilon  \cos t)x=0   
\ee
 can be thought of as a model of small--amplitude oscillations of a  pendulum whose pivot oscillates sinusoidally in the vertical direction.  
This system arises in numerous other settings which we will not list here. 
 We  can rewrite Mathieu equation as a system 
$$\dot\x=a\x, \hbox{ where } \x={x\choose \dot x},\quad a=\left(\begin{matrix}0&1\\ -\omega^2(1+\epsilon  \cos t)&0\end{matrix}\right)\in\sl_2(\R),$$
with the fundamental matrix   $g(t)\in\SL_2(\R)$ defined by $\dot g=ag,$ $g(0)=\II.$ From now on we   assume that $ |\epsilon |< 1 $, 
so that  
$\< a, a\>=-4\det(a)=-4\omega^2(1+\epsilon  \cos t)<0$, and thus  $a(t)$ is timelike.  Since the diagonal entries of  $a$ vanish, $a$ is constrained to the plane $a_1=0$, and thus the space curve $\spa$ follows a geodesic segment  on $H^2$ (unless $ \epsilon =0 $, in which case $\spa$ is a point); 
 in particular,  $k=0$ for the geodesic curvature of the space curve.  
 From equation~\eqref{eq:curv}, we obtain the expression for the geodesic curvature  of the body curve $\bod$: 

$$
	K=-{|a|\over |\dot\spa|}=-\frac{4 \omega   (1+\epsilon  \cos t)^{3/2}}{\epsilon |\sin t|}.
$$

Thus $\bod(t)$ has cusps at $t=n\pi,$ $n\in\Z$, see Figure~\ref{fig:rollingmatt}.

\begin{figure}
\centering\begin{tabular}{ccc}
\includegraphics[width=.2\textwidth]{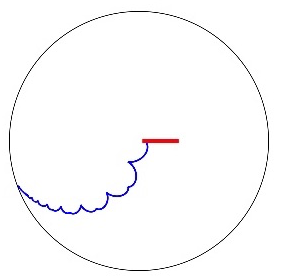}&
\includegraphics[width=.2\textwidth]{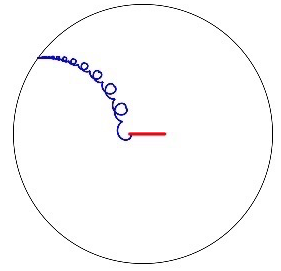}&
\includegraphics[width=.2\textwidth]{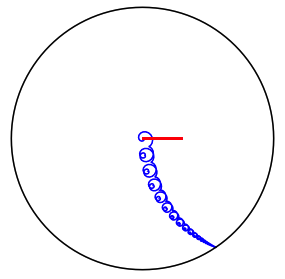}\\
$\omega=1/2$&$\omega=1$&$\omega=3/2$\\
&&\\
\includegraphics[width=.2\textwidth]{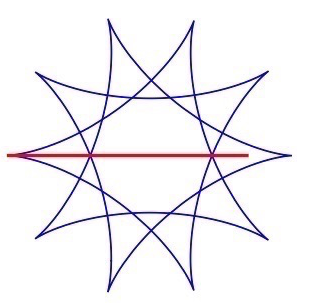}&
\includegraphics[width=.2\textwidth]{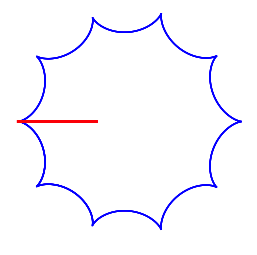}&
\includegraphics[width=.2\textwidth]{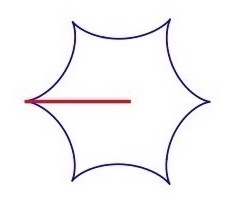}\\
$\omega\approx 1/5$&$\omega\approx 2/5$&$\omega\approx 1/3$
\end{tabular}
\caption{The Mathieu equation: The space curve $\spa$ (the horizontal  segment) and  the body  curve $\bod$ in the Poincar\'e disk model of $H^2$, for various choices of $ \omega $ and $\epsilon$. Top row: unstable case (hyperbolic period map); bottom row: stable case (elliptic period map).}
	\label{fig:rollingmatt}
\end{figure}

\mn 

We recall briefly that the  {\em period map} (the {\em monodromy}, or {\em Floquet matrix})  of equation \eqref{eq:harm} is defined by $M:=g(2\pi)\in \SLt$, where $g(t)$ is the fundamental solution of the associated linear system, and that 
it determines completely the stability properties of equation~\eqref{eq:harm} in the sense that all solutions are  bounded for all time if and only if $M$ is  
elliptic, or equivalently,  if and only  if the set of  its matrix powers $\{M^n | n\in\Z\}$ is  bounded.  Note that for $ | \epsilon | < 1 $, the infinitesimal generator $a(t)$ of the flow $g(t)$  of (\ref{eq:harm}),  for each  $t$,  is  elliptic, and yet   $M$, thought of as a composition of a non commuting family of infinitesimal elliptic rotations, may  itself fail to be elliptic, leading to unbounded solutions of   \eqref{eq:harm}, a phenomenon known as  {\em parametric resonance} \cite[\S 25, p.~113]{Ar}.  Figure~\ref{fig:tongues} shows the associated {\em Arnold tongues}: the shaded regions in the $(\omega, \epsilon ) $--plane, corresponding to the parameter values for which the period map $M$ is hyperbolic. 
\begin{figure}
\centering
\includegraphics[height=.23\textwidth]{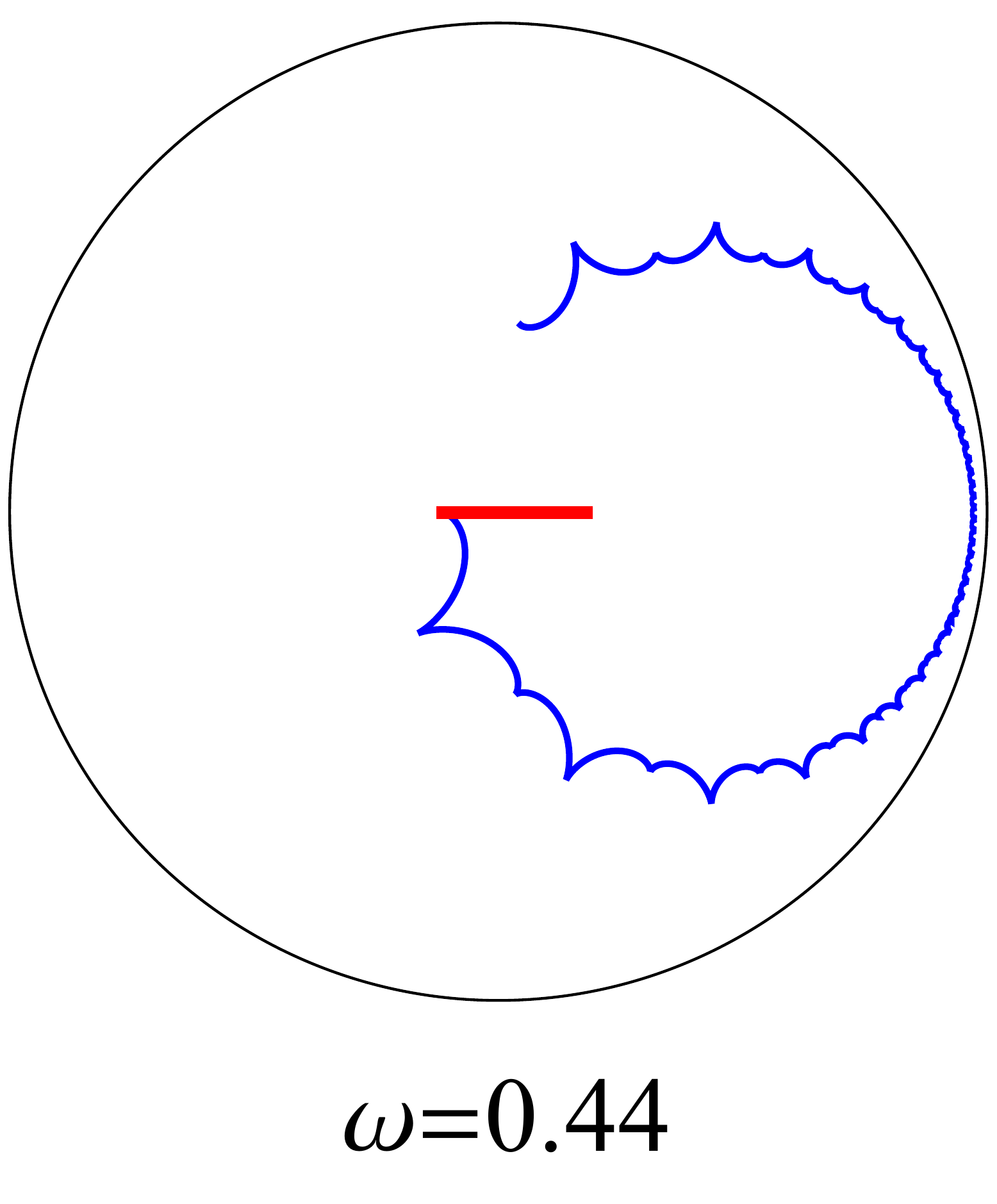}
\includegraphics[height=.23\textwidth]{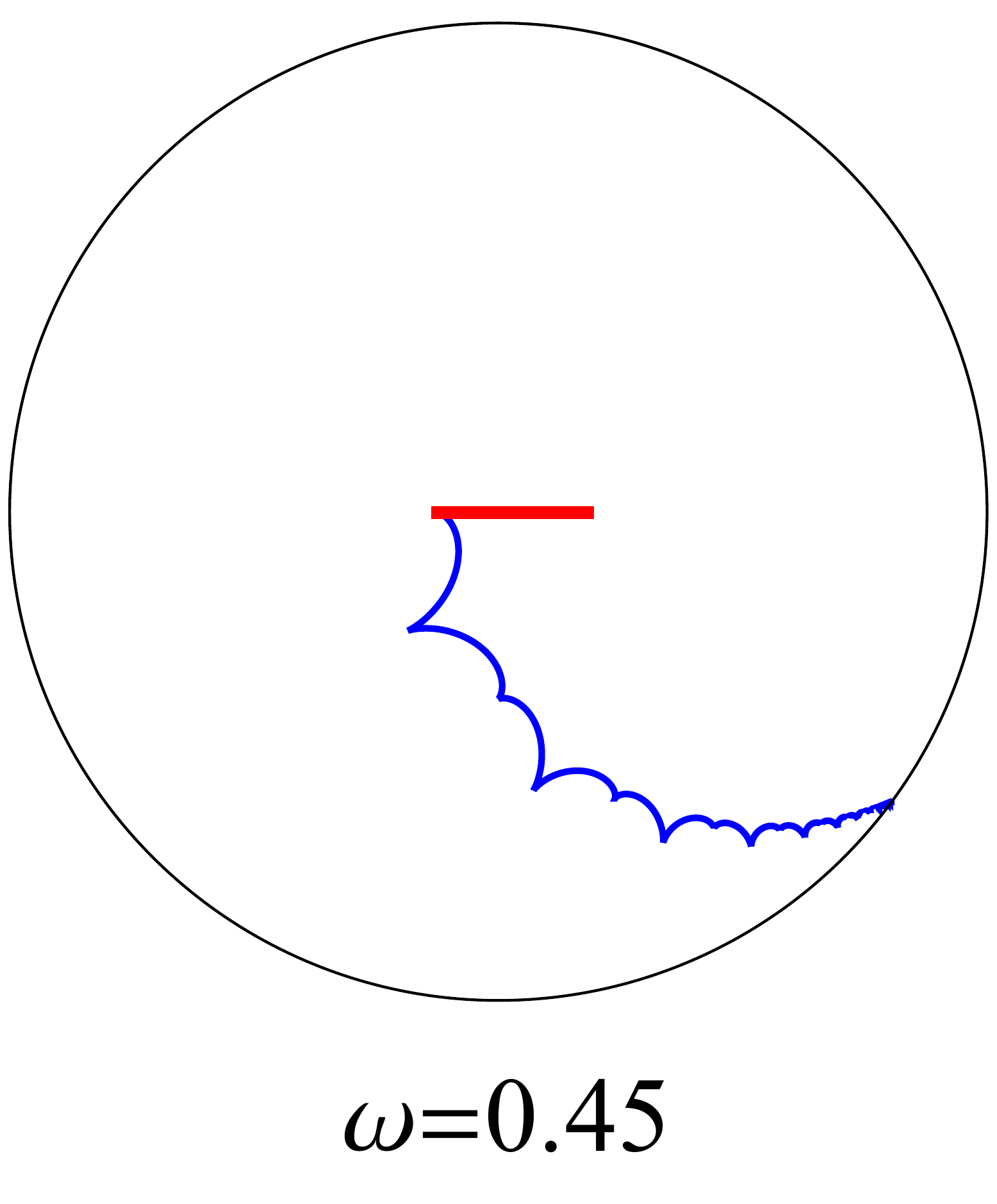}
\includegraphics[height=.23\textwidth]{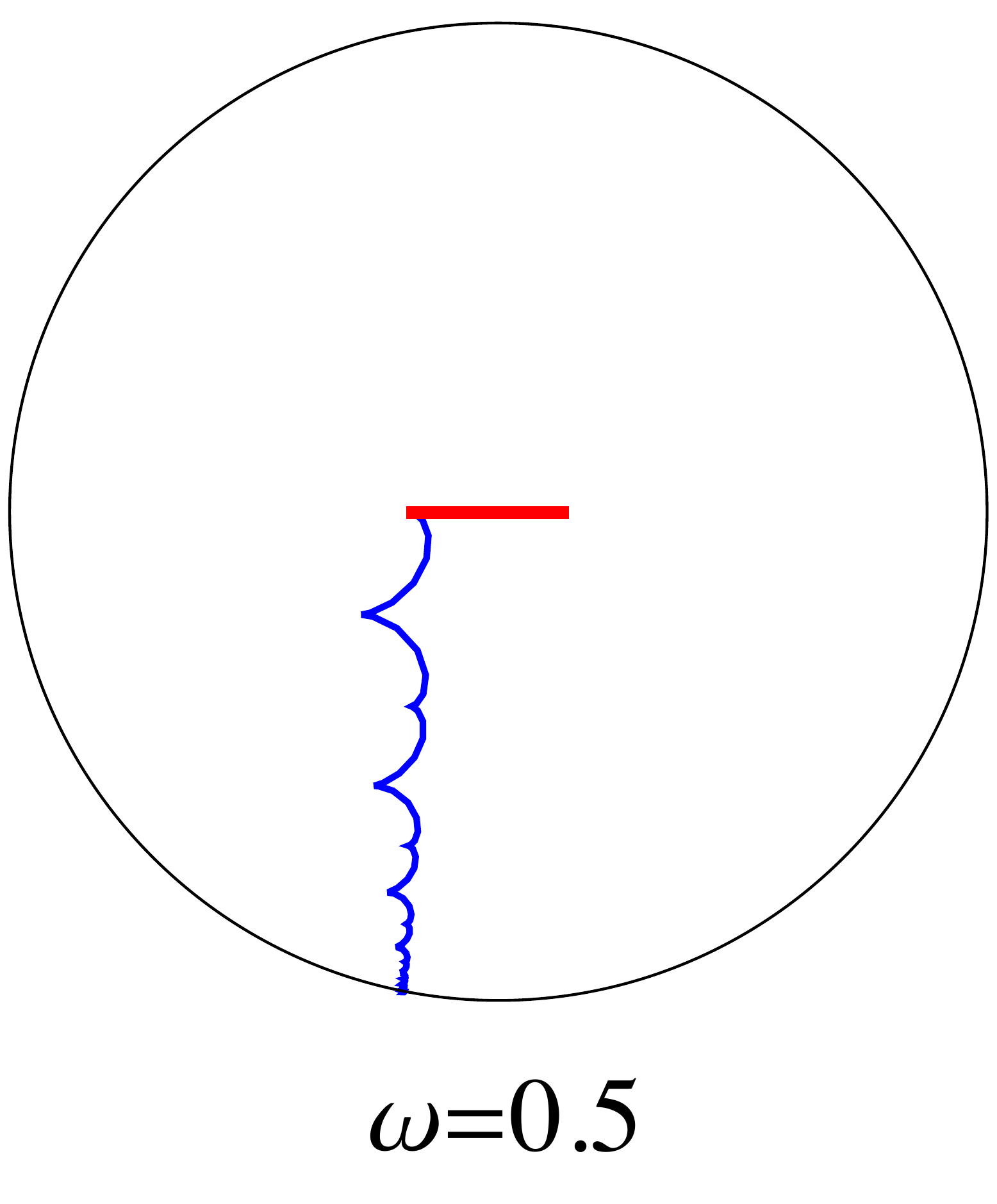}
\includegraphics[height=.23\textwidth]{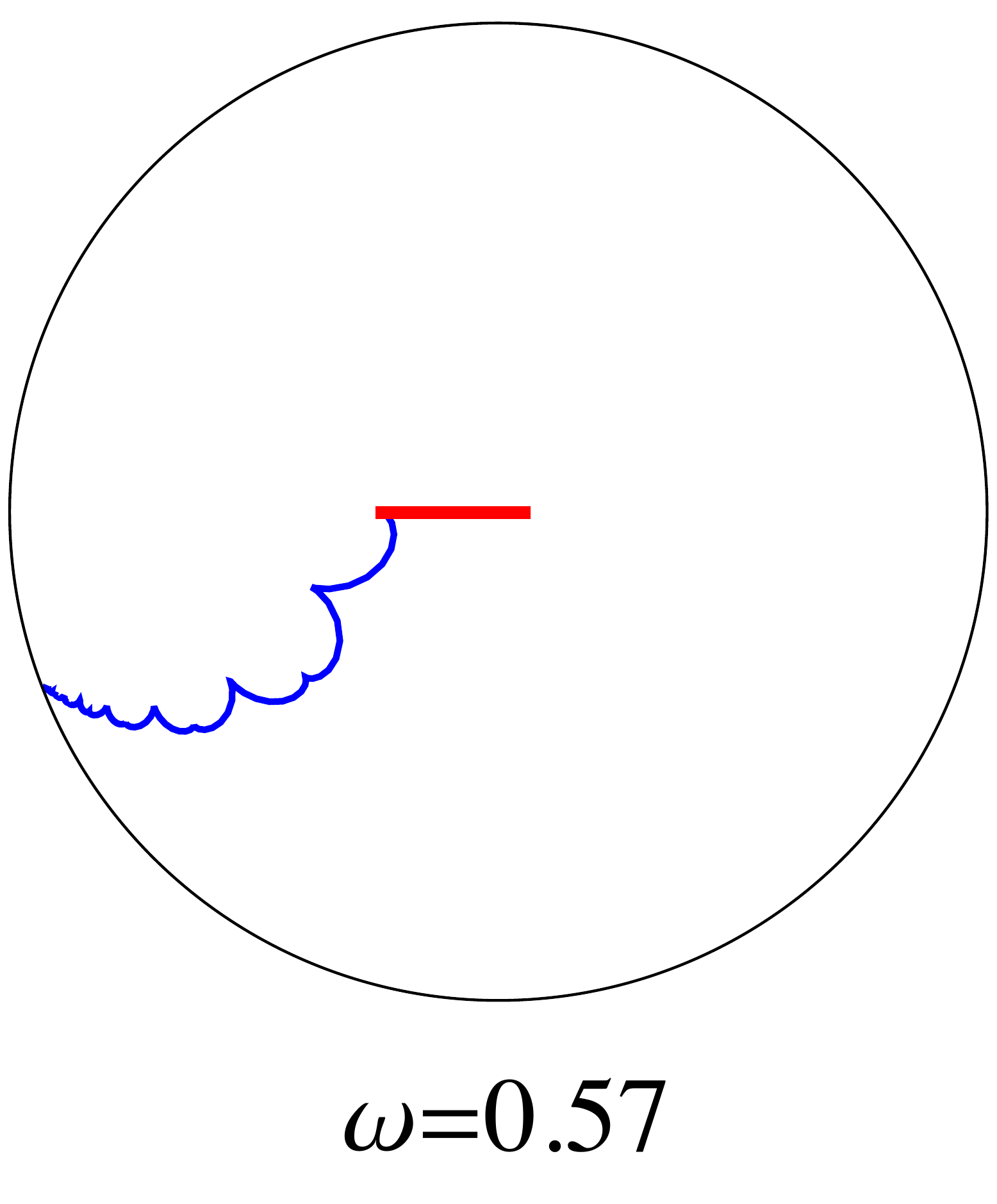}
\includegraphics[height=.23\textwidth]{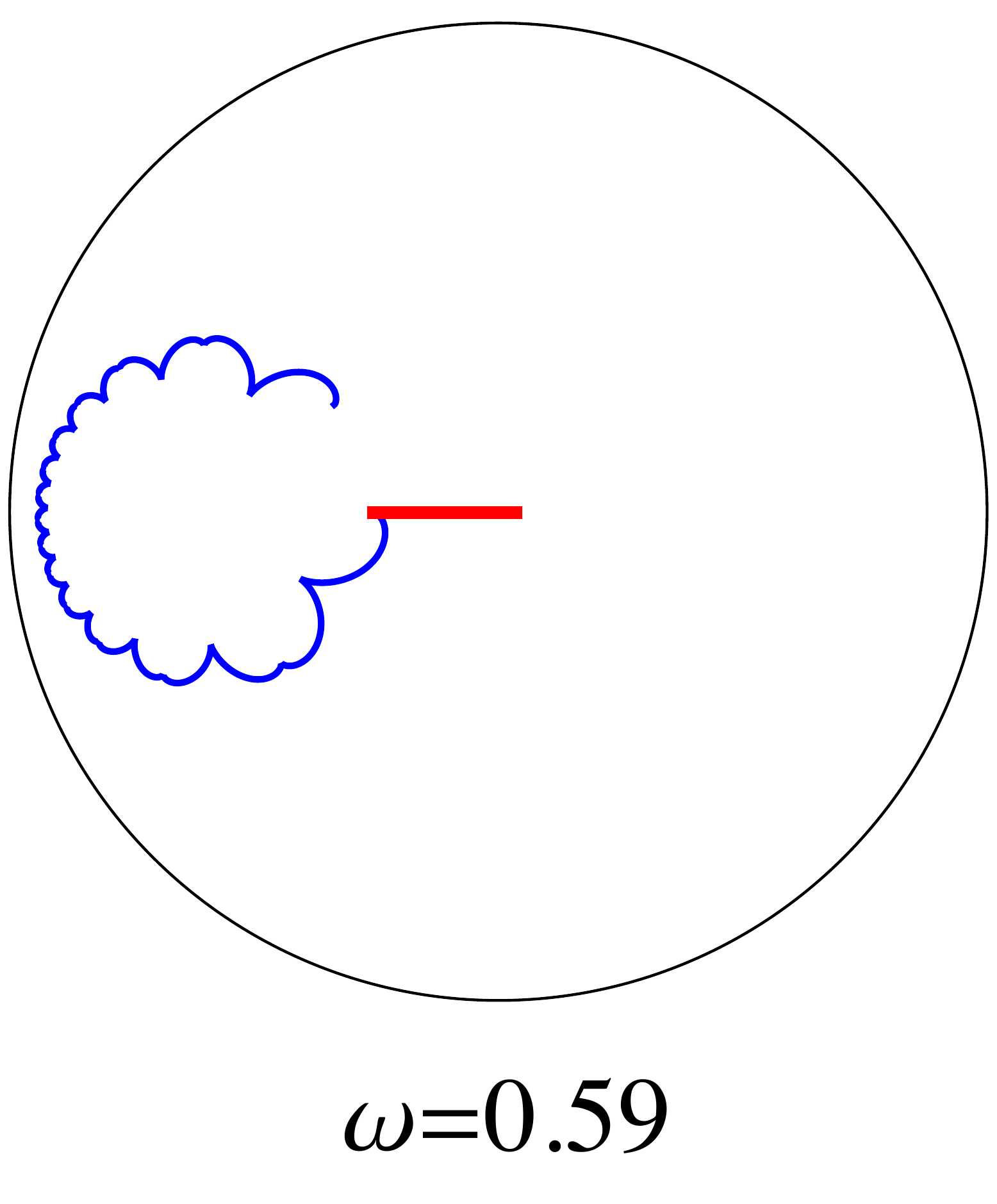}
\caption{As $ ( \omega, \epsilon )  $ crosses the first Arnold tongue of Figure~\ref{fig:tongues} (with fixed $ \epsilon = 0.55) $, the curve $ {\bf N} $  changes as shown, starting with the elliptic case  on the left, through hyperbolic (the middle three figures) and ending an elliptic monodromy again (right) in the next stability region.}
\label{fig:tonguecrossing}
\end{figure}

\begin{figure}
\centerline{\includegraphics[width=.6\textwidth]{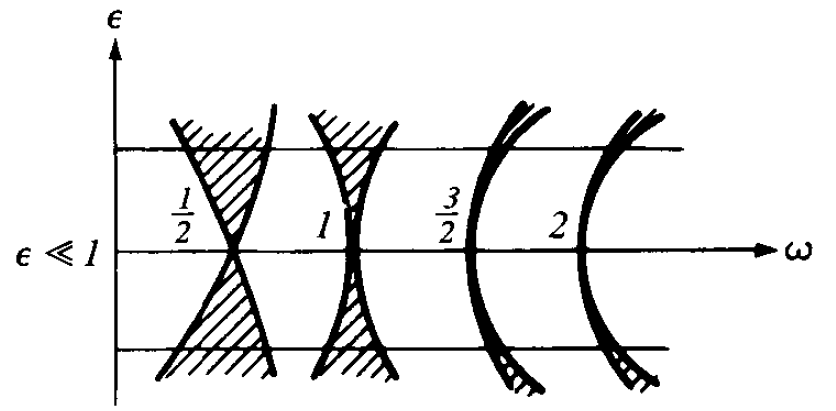}}
\caption{Arnold's tongues for the Mathieu equation.}
\label{fig:tongues}
\end{figure}

Returning to  the hyperbolic plane $H^2$, Figure~\ref{fig:tonguecrossing} illustrates how stability of the Mathieu equation is reflected in  the body curve $\bod(t)$: 
 for $ ( \omega ,\epsilon ) $ in the stable (unshaded)  region of Figure \ref{fig:tongues}, the body curve $ \bod $ is quasi-periodic or periodic, as must be the case 
since the set $\{M^n | n\in \Z\}$ is bounded. On the other hand, for all resonant $ ( \omega ,\epsilon ) $ (the shaded regions of Figure~\ref{fig:tongues}) the body curve $ \bod $ 
extends to the absolute (the `circle at infinity' in the Poincar\'e  disk model of $H^2$ in Figure ~\ref{fig:tonguecrossing}), reflecting the fact that the powers $ M^n $ are unbounded as $ | n | \rightarrow \infty $. 

We also point out that  if the period map  $M$ is elliptic, conjugate to a  rotation through an angle $ 2 \pi /n $, the body curve ${\bf N}$ is closed, with $ 2n $ cusps, as shown in the lower row of images in Figure \ref{fig:rollingmatt}.

 Figure~\ref{fig:rollingmatt} shows the `static' picture, i.e., the initial position of ${\bf N}$ at $t=0$; Figures~\ref{fig:snapshots} and \ref{fig:rollingmatt1} illustrate  the rolling of ${\bf N}$  on the space curve ${\bf n}$.   

  \begin{figure}
	\centerline{\includegraphics[width=.8\textwidth]{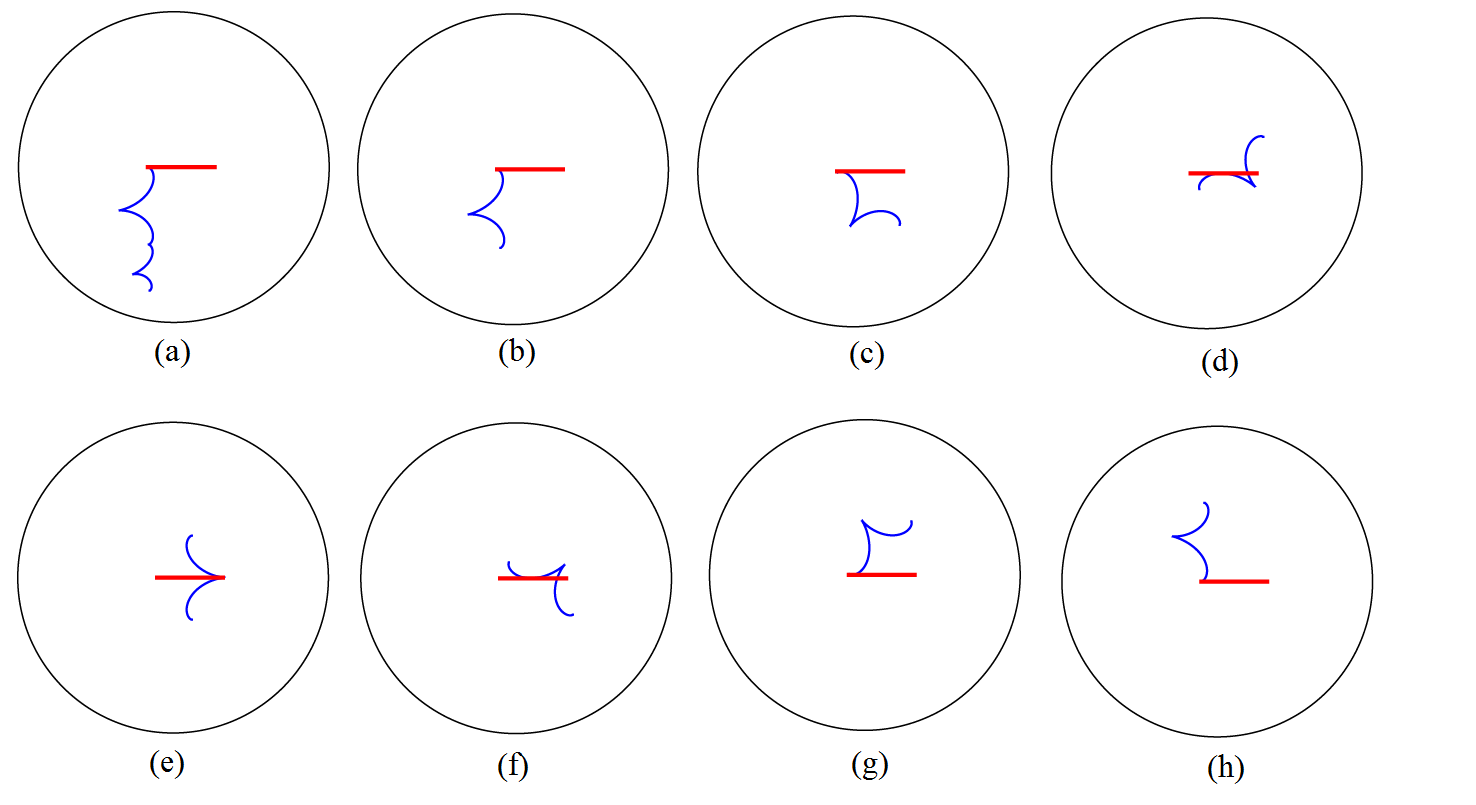}}
	\caption{\small (a): a piece of the `body' curve ${\bf N} $ and the space curve $\spa$ (the horizontal segment) in the Poincar\'e disk; 
	(b)-(h): some snapshots of a single `loop' of the body curve ${\bf N}$ rolling on the 
	space curve ${\bf n}$. }
	\label{fig:snapshots}
\end{figure}  
\begin{figure}
	\center{\includegraphics[width=0.3\textwidth]{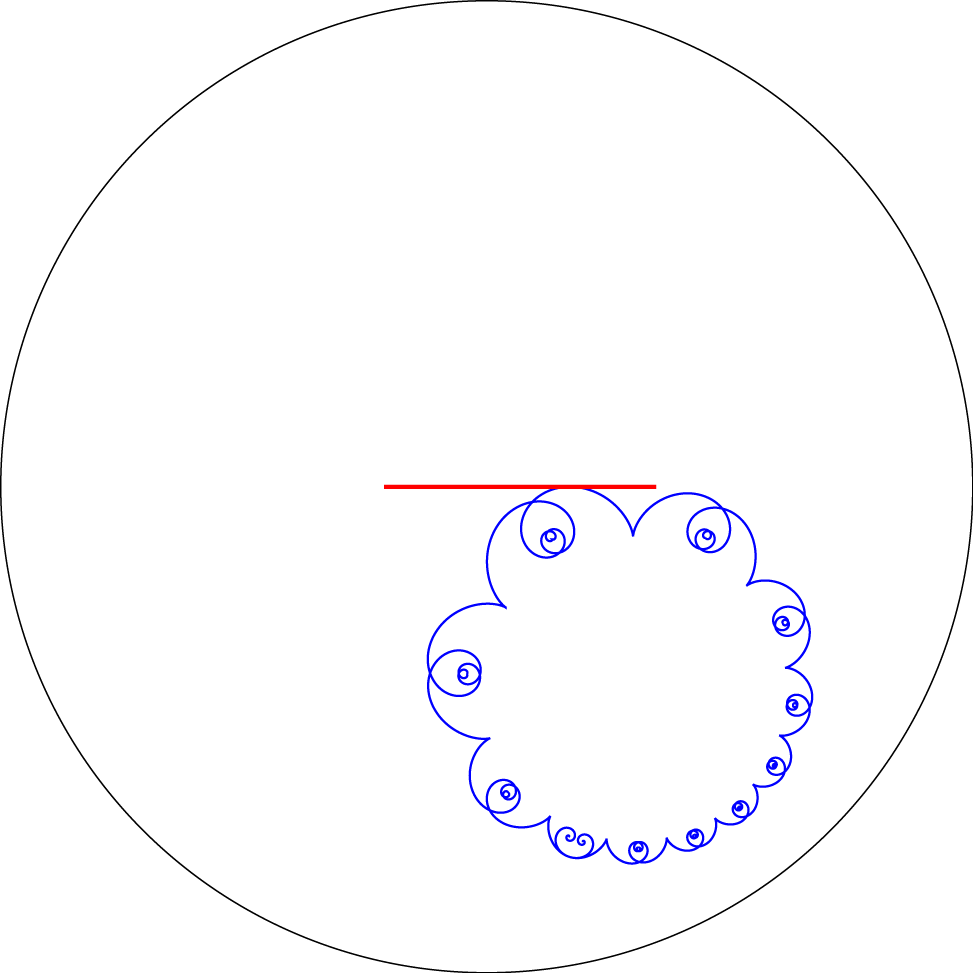}}
	\caption{\small Another elliptic case:   $ {\bf N} $ rolls on ${\bf n}$.}
	\label{fig:rollingmatt1}
\end{figure}

\section{Example: the bicycle equation (spacelike angular velocity)} \label{sect:be}

In this section we illustrate Theorem \ref{thm:both} for $G=\SLt$ with another example, where the motion of a `bicycle' is represented by rolling of cones in  Minkowski space; the bicycle is described in 
the caption of Figure~\ref{fig:bike}.

    \begin{figure}[thb]
    \def\svgwidth{\textwidth}
    \import{./}{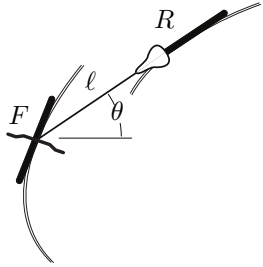}

	\caption{\small The `bicycle' is represented by  a segment $ RF $ of fixed length $ \ell $  whose `front end' $ F$ undergoes a prescribed motion along the `front track', and   
	whose `rear end' $R$  motion is constrained by the `no slip' condition: its velocity is aligned with  the segment $RF$ at all times. }
	\label{fig:bike}
\end{figure}  

We start by recalling the description the motion of a bicycle by a linear system of  ODEs. The `no slip' condition  is easily seen to be equivalent to the angle $\theta$ of the bicycle satisfying

\begin{equation} 
	\ell\dot \theta =\dot x \sin \theta-  \dot y \cos \theta , 
	\label{eq:bike}
\end{equation} 
where $F(t)= (x(t),y(t))$ is a parametrized  `front track'. Equation \eqref{eq:bike} is equivalent to 
\begin{equation} 
	\frac{d}{dt} \left(\begin{matrix}u\\v\end{matrix}\right)=-{1\over 2\ell}\left(\begin{matrix}\dot x&\ \ \dot y\\\dot y&-\dot x\end{matrix}\right)
	\left(\begin{matrix}u\\v\end{matrix}\right); 
	\label{eq:bikesys}
\end{equation}   
namely, for  any solution of the linear system  \eqref{eq:bikesys},  the angle
\begin{equation} 
	\theta = 2 \arg (u+iv)
	\label{eq:2arg}
\end{equation}  
evolves according to  equation   \eqref{eq:bike}. The proof of this equivalence is a straightforward calculation (see  \cite[Theorem 1]{BLPT}). 

The coefficients matrix $a(t)$ of the system  \eqref{eq:bikesys} satisfies
$
	\langle a, a \rangle =-  4\det(a) = (\dot x ^2 + \dot y  ^2 )/ \ell^2  > 0, 
$ 
so that $a$ is spacelike and $\spa=a/|a| \in H^{1,1}$.   From now on we   assume that the front track $F(t)$ is a closed convex  curve  of perimeter $L$, parametrized by arc length, i.e.,  $|\dot F|^2=\dot x^2+\dot y^2=1$, so   $\spa=-\dot x\,\bi-\dot y\,\bj$ is a  parametrization of the equator $x_3=0$  of $H^{1,1} $. In other words, the `space curve' follows  the equator;  in particular, the geodesic curvature of $\spa$ is $ k = 0 $. To calculate the geodesic curvature of the body curve  we use formula \eqref{eq:curv}, obtaining   $K=-|a|/|\dot\spa|=-1/(\ell\kappa)$, where $\kappa=|\ddot F|=\sqrt{\ddot x^2+\ddot y^2}$ is the curvature of the front track.  That is:  {\em the geodesic  curvatures of the body curve $\bod(t)\in H^{1,1}$ and the  front wheel track $F(t)\in \R^2$ are  reciprocal, up to a factor.} 

This surprising reciprocal connection between two curves living in different spaces -- the bike's front track in $\R^2$ and the body curve in $H^{1,1}$ -- was proven here by computation. It turns out, however, that there is a geometrical explanation of this reciprocity; we will provide this explanation elsewhere. 

\mn 

We now make some observations on the body curve. Since $ F(t) $ is assumed to be closed,    the coefficient matrix  of the bicycle system \eqref{eq:bikesys} is periodic; the Floquet matrix $M_\ell$  of this system is referred to as the {\em $\ell$-bicycle monodromy} of the front track. 
The monodromy $M_\ell$ may be  elliptic, parabolic or hyperbolic; as a side remark, in the latter case $M_\ell$ has two real eigendirections, which correspond to two    {\em closed} rear wheel tracks, as  Figure \ref{fig:bmonod} illustrates; one of these corresponds to the bike moving backwards. 

\begin{figure}
\centering%
    \def\svgwidth{\textwidth}
    \import{./}{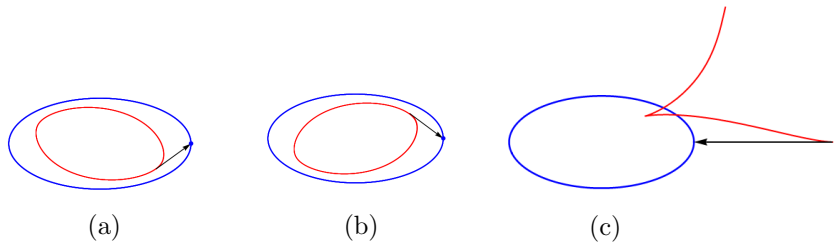}

\caption{\small Bicycle monodromy for an elliptical front track (blue): if the bicycle length $\ell$ is small enough the monodromy is hyperbolic; (a) and (b) show the two closed back tracks (red) corresponding to the two fixed point of $M_\ell$ in $\R P^1$. (c): for  $\ell$ large enough, the monodromy is elliptic, conjugate to a rotation.}\label{fig:bmonod}
\end{figure}

 \begin{figure}
\begin{tabular}{cccc}
\hspace{-.7cm}\includegraphics[width=.25\textwidth]{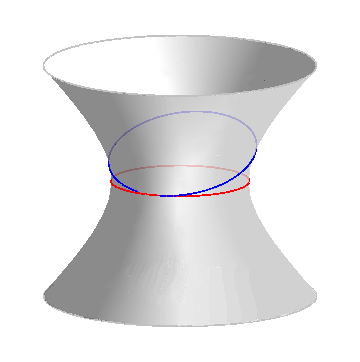}& 
\includegraphics[width=.25\textwidth]{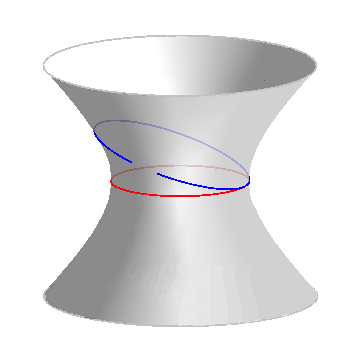}& 
\includegraphics[width=.25\textwidth]{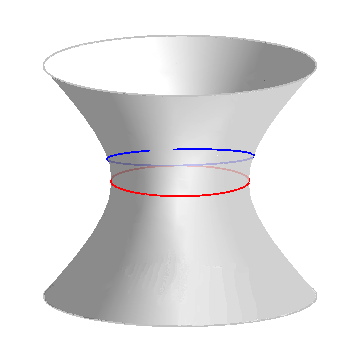}& 
\includegraphics[width=.25\textwidth]{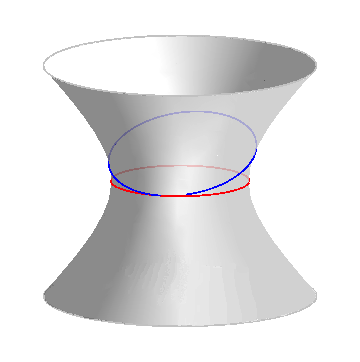}
\end{tabular}
\caption{\small Some snapshots of rolling curves in $H^{1,1}$, representing bicycling along a circular front track with elliptic monodromy ($\ell>$ radius of the front track).  The  `body curve' $\bfN$ (the tilted  ellipse) has  constant curvature $|K|>1$, and is rolling  
	along  the stationary `space curve' $\bfn$ (the `equator' of $H^{1,1}$, a geodesic). }
	\label{fig:rollingellipse}
\end{figure}

 \mn{\bf An example.} In the special case when the front track is the unit circle we have  $\kappa=1$,  $|K|=1/\ell$,  so $\bod$ is a spacelike constant
 geodesic curvature curve on $H^{1,1}$. Now all  curves of constant geodesic curvature on $H^{1,1}$ are given simply  by  plane
  sections of this hyperboloid (just like in case of the ordinary sphere ${ S}  ^2\subset\R^3$). In our case, the intersecting   plane is
   tangent to the equator  at $\spa(0)$, Figure~\ref{fig:rollingellipse}.  
   For $ \ell >1$ this plane section is an ellipse with geodesic curvature $|K|=1/\ell<1$, 
   as shown in  
Figure~\ref{fig:rollingellipse}, and the bicycle monodromy is elliptic. For $ \ell =1 $ the plane section is a parabola, with 
$| K| = 1 $ and $M_\ell$ parabolic.  Similarly, for $ \ell < 1 $ the plane section is a hyperbola, 
one branch of which is the body curve, with  asymptotes a pair of ruling null lines of $H^{1,1}$, with  $|K|=1/\ell>1$,  and the bicycle monodromy is hyperbolic.

\mn{\bf General closed front track.} In the general case when  $\kappa$ (the curvature of the bicycle front track $F$) is not constant and the bicycle length  $\ell$ is small enough, the bicycle monodromy 
$M_\ell$ is hyperbolic and the resulting body curve $\bod$ in $H^{1,1}$ is unbounded, asymptotic to one of the ruling null lines, as shown in Figure \ref{fig:hyproll}(b). For $\ell$ large  enough the bicycle monodromy is elliptic and the corresponding body curve is bounded quasi-periodic, filling up a `ribbon' wrapped around $H^{1,1}$, as illustrated in  Figure \ref{fig:hyproll}(d).
 \begin{figure}

\centering\begin{tabular}{ccc}
\centered{\includegraphics[width=.25\textwidth]{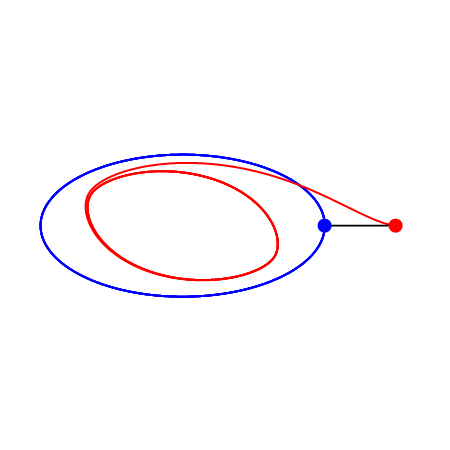}}
& \qquad  \qquad &
 \centered{\includegraphics[width=.3\textwidth]{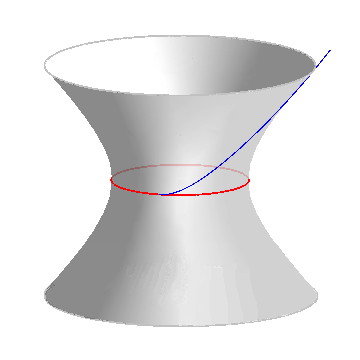}}\\
(a) && (b) \\
&& \\
\centered{\includegraphics[width=.3\textwidth]{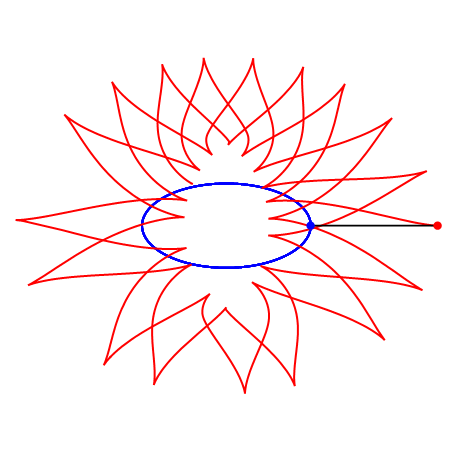}}
& \qquad  \qquad &
 \centered{\includegraphics[width=.3\textwidth]{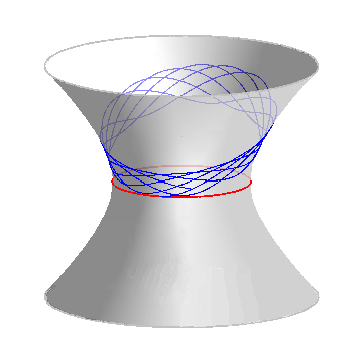}}
\\
(c) && (d) 
\end{tabular}
 \caption{\small (a) Bicycling along an elliptical front track (blue), with hyperbolic monodromy (small $\ell$). The rear track (red) spiral towards a closed curve, corresponding to the stable fixed point of the monodromy. 
	(b) The corresponding body curve (blue) is unbounded, asymptotic to one of the null lines on $H^{1,1}.$
	(c) An elliptical front track with elliptic monodromy. The rear track is quasi-periodic. (d) The corresponding body curve is contained  in  a `ribbon' wrapped  around $H^{1,1}.$
}
	\label{fig:hyproll}
\end{figure}

Returning to the case of a  general closed convex front track, the body  curve ${\bf N}$ on $H^{1,1}$ is obtained by deforming the equator ${\bf n}$  by changing  its  geodesic curvature from $0$ to $ 1/(\ell \kappa)$; the resulting deformation ``splits" what initially was the closed curve, with the endpoints and the tangents at the endpoints related by
\[
	 \Ad_{g(L)}  {\bf N} (L)= {\bf N}(0), \  \  \Ad_{g(L)} \dot {\bf N} (L)= \dot{\bf N}(0), 
\]  
as Figure \ref{fig:rollingellipse} illustrates. It turns out that the split is rather special for large $ \ell $:   the endpoints separate almost tangentially, as Figure \ref{fig:rollingellipse} suggests, and {\it  the distance of  
separation is proportional to the area $A$ enclosed by the front track}, to the leading order, as Figure~\ref{fig:rollingellipse} suggests. Indeed, this follows from the following observation. 
\begin{lemma}\label{lem:Ad} Let $A$ be the area enclosed by the front track $F$. 
  For large $ \ell$, the adjoint action $\Ad_{g(L)}$ is an elliptic rotation through an angle 
\begin{equation} 
	 \ell ^{-2}A+ O(\ell^{-3} ),
	\label{eq:rotationangle}
\end{equation}   around a timelike axis which is $O( \ell ^{-1}) $ -- close to the $ a_3 $ axis in $ \R^{2,1}$. 
\end{lemma}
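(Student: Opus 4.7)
The plan is to apply the Magnus expansion to the fundamental matrix $g(L)$ of the bicycle system \eqref{eq:bikesys}, using $\epsilon:=1/\ell$ as a small parameter, and to extract the leading-order behavior as $\ell\to\infty$. Comparing \eqref{eq:bikesys} with the basis \eqref{eq:basis} gives $a(t)=\epsilon\,b(t)$ with
$$b(t)=-\dot x(t)\,\bi-\dot y(t)\,\bj,$$
which is bounded (since $\dot x^2+\dot y^2=1$) and takes values in the spacelike plane $\{a_3=0\}\subset\slt$. For $\ell$ large the Magnus series $g(L)=\exp(\Omega(L))$ converges and its $n$-th term satisfies $\Omega_n=O(\epsilon^n)$, so I only need $\Omega_1$ and $\Omega_2$ modulo an $O(\epsilon^3)$ remainder.

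The first-order term
$$\Omega_1=\epsilon\int_0^L b\,dt=-\epsilon\Bigl(\bi\int_0^L\dot x\,dt+\bj\int_0^L\dot y\,dt\Bigr)$$
vanishes because $F$ is closed. For the second-order term
$$\Omega_2=\tfrac{\epsilon^2}{2}\int_0^L\!\!\int_0^{t_1}[b(t_1),b(t_2)]\,dt_2\,dt_1,$$
I use $[\bi,\bj]=\bk$ from \eqref{eq:comm} to compute $[b(t_1),b(t_2)]=\bigl(\dot x(t_1)\dot y(t_2)-\dot y(t_1)\dot x(t_2)\bigr)\,\bk$. Evaluating the inner integral and discarding the boundary contributions that die by the closure of $F$ reduces the iterated integral to $\int_0^L(\dot x\,y-\dot y\,x)\,dt=-2A$ by Green's theorem, where $A$ is the (signed) area enclosed by $F$. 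Hence
$$\Omega(L)=-\ell^{-2}A\,\bk+O(\ell^{-3}).$$

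Finally I translate this into a statement about $\Ad_{g(L)}$. Since $\bk$ is timelike, $\Omega(L)$ is timelike for $\ell$ large, so $g(L)$ is elliptic and $\Ad_{g(L)}$ is a pseudo-rotation whose axis $\Omega(L)/|\Omega(L)|=\bk+O(\ell^{-1})$ is $O(\ell^{-1})$-close to the $a_3$-axis. A short calculation using $[\bk,\bi]=-\bj$ and $[\bk,\bj]=\bi$ from \eqref{eq:comm} shows that $\Ad_{\exp(c\bk)}$ rotates the spacelike $(\bi,\bj)$-plane through the angle $c$, so the rotation angle is $\ell^{-2}A+O(\ell^{-3})$, as claimed in \eqref{eq:rotationangle}.

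The conceptually key point is that the Lie bracket in $\slt$, restricted to the spacelike $(\bi,\bj)$-plane, realizes precisely the planar area form $dx\wedge dy$, so the enclosed area $A$ emerges automatically from the second-order Magnus term via Green's theorem. The only mildly technical step is the tail bound $\sum_{n\ge 3}\Omega_n=O(\ell^{-3})$, but this is standard given that $b$ is bounded on $[0,L]$ and $\|\int_0^L a\,dt\|=O(\ell^{-1})$ brings one well inside the convergence radius of the Magnus series.
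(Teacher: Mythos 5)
Your proof is correct, but it takes a genuinely different route from the paper's. The paper invokes Prytz's formula from the bicycle/planimeter literature, which directly gives $\Delta\theta = \ell^{-2}A + O(\ell^{-3})$ for the bicycle angle after one circuit of the front track; it then observes via \eqref{eq:2arg} that $g(L)$ is $O(\ell^{-3})$-close to a Euclidean rotation by half that angle, passes to $\Ad_{g(L)}$ (which doubles the angle again), and finally locates the timelike axis by an implicit-function / perturbation-of-fixed-point argument on the unit sphere. You instead work entirely in the Lie algebra: you expand $g(L) = \exp(\Omega(L))$ by the Magnus series in the small parameter $\epsilon = 1/\ell$, observe that $\Omega_1$ vanishes by closure of $F$, compute $\Omega_2 = -\ell^{-2}A\,\bk$ by reducing the iterated commutator integral via Green's theorem, and bound the tail by the standard $\Omega_n = O(\epsilon^n)$ estimate. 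The axis and angle of $\Ad_{g(L)}$ then come directly from $\Omega(L)$ with no separate implicit-function step, since the axis of $\Ad_{\exp\Omega}$ \emph{is} the direction of $\Omega$, and normalizing $\Omega = -\ell^{-2}A\,\bk + O(\ell^{-3})$ gives the $O(\ell^{-1})$ closeness to the $a_3$-axis immediately. Your approach buys self-containedness (Magnus is standard, while Prytz's formula is specialized and is itself essentially the same second-order computation in disguise), and it makes explicit the pleasant fact that the bracket $[\bi,\bj] = \bk$ turns the spacelike plane of $\slt$ into a copy of the planar area form $dx\wedge dy$, which is where $A$ comes from. The paper's approach buys brevity and a tighter thematic tie to the bicycle geometry that motivates the section. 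Minor caveat: the sign of your $\Omega_2$ and hence of the rotation angle depends on the orientation of $F$; this is cosmetic since the lemma's statement concerns the unsigned angle, but you might note it.
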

 In the special case when the front track is the unit circle, the picture is particularly simple, Figure~\ref{fig:rollingellipse}: 
the body curve ${\bf N}$  is an arc of an ellipse lying in a plane tangent to the equator and of slope $ \ell ^{-1}$ (exactly); and the axis of the rotation $\Ad_{g(L)}$  is the line of slope $ \ell $ (in the Lorenz plane the orthogonal lines have reciprocal slopes; in other words, the slope of the light line is the geometric mean of two orthogonal slopes). 
 
\mn {\em Proof Lemma \ref{lem:Ad}.} 
\begin{enumerate}[1.]
\item As stated before,  we  assume $ F(t) $ to be a closed front track and $ \ell $ to be large.  According to Prytz's  
formula (see \cite{F} or  \cite[equation (1)]{BLPT}) the bicycle angle $\theta$ governed by   (\ref{eq:bike}) changes, after the front wheel traces out the front track, by   
\begin{equation} 
  	     \Delta\theta = \ell ^{-2} A + O(\ell ^{-3}  ). 
	    \label{eq:prytz}
\end{equation}  
   In particular, the rotation is near--rigid: the leading order term is independent on the initial condition $ \theta (0) $. 
\item   According to    (\ref{eq:2arg}), every solution  $ (u,v) $ of   (\ref{eq:bikesys}) rotates through half as much as $\theta$ does:   
\[
	\Delta \arg (u+iv)  = \frac{1}{2} \Delta\theta \ \stackrel{\eqref{eq:prytz}}{=}  \ \frac{1}{2} 	\ell ^{-2} A + O(\ell ^{-3}  );
\]
and since these angles are independent of the initial condition modulo $ \ell^{-3} $, we conclude that $ g(L) $ 
is $ O(  \ell ^{-3}) $--close to the Euclidean rotation through $  \frac{1}{2} 	\ell ^{-2} A $.   And this in turn implies
that $ \Ad_{g(L)}$ is $ O(  \ell ^{-3}) $--close to the Euclidean=Minkowski rotation $ R $ around the ${\bf k}$--axis in the Minkowski space 
through twice the angle, namely through
\[ \ell ^{-2} A + O(\ell ^{-3}). 
\] 
\item   This proximity  in turn implies via an implicit function argument 
that the  the Minkowski rotation axis  of $  \Ad_{g(L)}$ (i.e. the eigendirection corresponding to the eigenvalue $1$) is 
 $ O(\ell ^{-1} ) $--close to the ${\bf k}$--axis. Indeed, consider the maps induced by the linear maps $\Ad_{g(L)}$ and $R$ on the unit sphere, and examine what happens to the fixed point ${\bf k}$ of  $R$ as we perturb 
 $R$ to  $ \Ad_{g(L)}$. By an    implicit function argument, the displacement of the  fixed point is bounded by the size of the perturbation 
 ($ O(\ell ^{-1} ) $) 
 divided by the distance from $R$ to identity, which is {\it  at least} $\frac{1}{2}\ell^{-2} A $; thus the fixed point is displaced by {\it  at most}
 \[
	\frac{O(\ell ^{-3} )}{ \frac{1}{2} A\ell ^{-2} } =O(\ell ^{-1} ).
\]

 \item Finally, by the Minkowski orthogonality, the invariant plane
of $ \Ad_{g(L)}$ corresponding to the eigenvalues $ \pm i \bigl(\ell ^{-2} A +O(\ell ^{-3}) \bigr)  $ has the reciprocal slope, i.e., this plane  is  $ O(\ell^{-3} ) $--close to the equatorial plane. \qed  
 \end{enumerate}


\begin{thebibliography}{9}

\bibitem{Ar} V.I.~Arnol'd, {\em Mathematical methods of classical mechanics.} Vol. 60. Springer Science \& Business Media, 2013.

\bibitem{BLPT} G.~Bor, M.~Levi, R.~Perline, S.~Tabachnikov, {\em Tire Tracks and Integrable Curve Evolution,} Int. Math. Res. Not. IMRN (2018)

\bibitem{F} R.~Foote,  {\em Geometry of the Prytz planimeter,} Rep. Math. Phys. 42 (1998), 249--71.

\bibitem{Ma} M.~Levi, {\em Composition of rotations and parallel transport}.
Nonlinearity {\bf 9.2} (1996), 413.

\bibitem{Po}L.~Poinsot, {\em  Th\'eorie nouvelle de la rotation des corps}. Bachelier (1854).

\bibitem{Wh} E.T.~Whittaker, {\em  A treatise on the analytical dynamics of particles and rigid bodies}. Cambridge University Press, 2nd edition (1917).




 \end{thebibliography}
\end{document}